\begin{document}

\newenvironment{proof}[1][Proof]{\textbf{#1.} }{\ \rule{0.5em}{0.5em}}

\newtheorem{theorem}{Theorem}[section]
\newtheorem{definition}[theorem]{Definition}
\newtheorem{lemma}[theorem]{Lemma}
\newtheorem{remark}[theorem]{Remark}
\newtheorem{proposition}[theorem]{Proposition}
\newtheorem{corollary}[theorem]{Corollary}
\newtheorem{example}[theorem]{Example}

\numberwithin{equation}{section}
\newcommand{\ep}{\varepsilon}
\newcommand{\R}{{\mathbb  R}}
\newcommand\C{{\mathbb  C}}
\newcommand\Q{{\mathbb Q}}
\newcommand\Z{{\mathbb Z}}
\newcommand{\N}{{\mathbb N}}

\newcommand{\bfi}{\bfseries\itshape}

\newsavebox{\savepar}
\newenvironment{boxit}{\begin{lrbox}{\savepar}
\begin{minipage}[b]{15.5cm}}{\end{minipage}\end{lrbox}
\fbox{\usebox{\savepar}}}

\title{{\bf A global geometric decomposition of vector fields and applications to topological conjugacy}}
\author{R\u{a}zvan M. Tudoran}

\date{}
\maketitle \makeatother

\begin{abstract}
We give a global geometric decomposition of continuously differentiable vector fields on $\mathbb{R}^n$. More precisely, given a vector field of class $\mathcal{C}^{1}$ on $\mathbb{R}^{n}$, and a geometric structure on $\mathbb{R}^n$, we provide a unique global decomposition of the vector field as the sum of a left (right) gradient--like vector field (naturally associated to the geometric structure) with potential function vanishing at the origin, and a vector field which is left (right) orthogonal to the identity, with respect to the geometric structure. As application, we provide a criterion to decide topological conjugacy of complete vector fields of class $\mathcal{C}^1$ on $\mathbb{R}^{n}$ based on topological conjugacy of the corresponding parts given by the associated geometric decompositions.
\end{abstract}

\medskip

\textbf{MSC 2010}: 37C10; 34A26; 37C15.

\textbf{Keywords}: vector fields; global geometric decomposition; gradient--like vector fields; topological conjugacy.

\section{Introduction}
\label{section:one}

When thinking about decompositions of vector fields, the first example that crosses our minds is the famous Helmholtz--Hodge decomposition (see e.g. \cite{helmholtz}, \cite{shwarz}), also known as the \textit{fundamental theorem of vector analysis}, which provides a method to decompose a $3-$dimensional vector field (defined on some bounded domain, and satisfying suitable regularity conditions), into the sum of a gradient vector field and a divergence--free vector field. Starting from the original version, many extensions and generalizations of this decomposition have been given in various contexts, see e.g. \cite{deriaz}, \cite{ort}, \cite{shwarz}. Another interesting decomposition such that one of its summands is a gradient vector field, was given by Presnov \cite{presnov}. More precisely, he provides a method to decompose (uniquely) any continuously differentiable vector field globally defined on $\mathbb{R}^n$, as a sum of a gradient vector field (with potential function vanishing at the origin) and a vector field orthogonal to the identity, at each point of $\mathbb{R}^n$. The main part of the present work is concerned with a geometric generalization of Presnov's decomposition of continuously differentiable vector fields. More precisely, given a vector field of class $\mathcal{C}^{1}$ on $\mathbb{R}^{n}$, and an arbitrary fixed geometric structure on $\mathbb{R}^n$ (e.g. Euclidean, symplectic, Minkowski), we provide a unique global decomposition of the vector field as a sum of a left (right) gradient--like vector field naturally associated to the geometric structure (e.g. gradient vector field, Hamiltonian vector field, Minkowski gradient vector field) with potential function vanishing at the origin, and a vector field which is left (right) orthogonal to the identity, with respect to the geometric structure. As application, we provide a criterion to decide topological conjugacy of complete vector fields of class $\mathcal{C}^1$ on $\mathbb{R}^{n}$ based on topological conjugacy of the corresponding parts given by the associated geometric decompositions.

The structure of the paper is as follows. In the second section we prepare the geometric settings of the problem, by introducing the notion of geometric structure (i.e. a general nondegenerate real bilinear form on $\mathbb{R}^{n}$), whose geometry encompasses some of the classical geometries of $\mathbb{R}^{n}$ (e.g. Euclidean geometry, symplectic geometry, Minkowski geometry). Associated to an arbitrary fixed geometric structure, we define two gradient--like vector fields (i.e. the left/right gradient--like vector field) which extends naturally the notion of gradient vector field (from  Euclidean geometry), Hamiltonian vector field (from symplectic geometry), and Minkowski gradient vector field (from Minkowski geometry). Next, we prove the global Poincar\'e lemma in the context of gradient--like vector fields associated to a general geometric structure. The third section contains the main result of this work, which provides a global geometric decomposition of the vector fields of class $\mathcal{C}^{1}$ on $\mathbb{R}^{n}$ with respect to an arbitrary fixed geometric structure on $\mathbb{R}^{n}$. More precisely, given a vector field of class $\mathcal{C}^{1}$ on $\mathbb{R}^{n}$, and a geometric structure, we provide a unique global decomposition of the vector field as the sum of a left (right) gradient--like vector field (naturally associated to the geometric structure) with potential function vanishing at the origin, and a vector field which is left (right) orthogonal to the identity, with respect to the geometric structure. Then we apply this decomposition for two vector fields which generate some well known dynamical systems, namely, the Lotka--Volterra system from biology, and the Rikitake system from geophysics. As a consequence, we obtain that the Rikitake system is actually a Minkowski gradient system, if certain parameter vanishes. In the last section we present a theoretical application of the main result of this paper, in order to analyze the topological conjugacy of continuously differentiable vector fields in $\mathbb{R}^{n}$. More precisely, we provide a criterion to decide topological conjugacy of complete vector fields of class $\mathcal{C}^1$ on $\mathbb{R}^{n}$ based on topological conjugacy of the corresponding parts given by the associated geometric decompositions. This criterion is a geometric generalization of the main result from \cite{anas}.

\section{Geometric structures on $\mathbb{R}^{n}$ and Poincar\'e's lemma}

As the main purpose of this work is to provide a global geometric decomposition of the vector fields on $\mathbb{R}^{n}$, we introduce first the geometric settings of the problem. In order to do that, let us notice that the common starting point in defining some of the main classical geometric structures on $\mathbb{R}^{n}$ (e.g. Euclidean, symplectic, Minkowski) is to chose some nondegenerate real bilinear form satisfying certain properties specific to each geometric structure apart. Thus, a natural geometric framework which incorporates all of the above mentioned geometric structures, is given by the choice of a general nondegenerate real bilinear form on $\mathbb{R}^{n}$.

\begin{definition}
Let us call a \textbf{geometric structure} on $\mathbb{R}^{n}$ each nondegenerate real bilinear form, $b:\mathbb{R}^{n} \times \mathbb{R}^{n}\rightarrow \mathbb{R}$. The pair $(\mathbb{R}^{n},b)$, where $b$ is a geometric structure on $\mathbb{R}^{n}$, will be denoted by $\mathbb{R}^{n}_{b}$.
\end{definition}

\begin{example} Let $b$ be a geometric structure on $\mathbb{R}^{n}$. Then the following assertions hold.
\begin{enumerate}
\item[(i)] If $b$ is symmetric and positive definite, then $\mathbb{R}^{n}_{b}$ is an \textit{Euclidean vector space}. 
\item[(ii)] If $n$ is even and $b$ is skew--symmetric, then $\mathbb{R}^{n}_{b}$ is a \textit{symplectic vector space}. Recall that symplectic forms (i.e. skew--symmetric nondegenerate real bilinear forms) on $\mathbb{R}^{n}$ exist only if $n$ is even.
\item[(iii)] If $b$ is symmetric with the signature $(+,\dots,+,-)$, then $\mathbb{R}^{n}_{b}$ is a \textit{Minkowski vector space}.
\end{enumerate} 
\end{example}

Next result is a direct consequence of the nondegeneracy property of geometric structures on $\mathbb{R}^{n}$, and provides a compatibility relation between a general geometric structure and a fixed one, in this case, the canonical inner product on $\mathbb{R}^{n}$. 

\begin{proposition}
Let $b$ be a geometric structure on $\mathbb{R}^{n}$. Then there exists a unique invertible linear map $B\in\operatorname{Aut}(\mathbb{R}_{b}^{n})$ such that 
\begin{equation*}
\langle \mathbf{x},\mathbf{y} \rangle = b(\mathbf{x}, B\mathbf{y}), ~~\forall \mathbf{x},\mathbf{y}\in\mathbb{R}^{n}_{b},
\end{equation*}
 where $\langle \cdot,\cdot \rangle$ stands for the canonical inner product on $\mathbb{R}^{n}$. The pair $(b,B)$ will be called a \textbf{geometric pair} on $\mathbb{R}^{n}$.
\end{proposition}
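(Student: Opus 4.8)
The plan is to reduce the statement to the invertibility of the matrix representing $b$, phrasing the argument so that existence and uniqueness fall out of nondegeneracy simultaneously. First I would fix the standard basis $(e_1,\dots,e_n)$ of $\mathbb{R}^{n}$ and introduce the Gram matrix $M = (b(e_i,e_j))_{1\le i,j\le n}$, so that $b(\mathbf{x},\mathbf{y}) = \mathbf{x}^{\top} M \mathbf{y}$ and $\langle \mathbf{x},\mathbf{y}\rangle = \mathbf{x}^{\top}\mathbf{y}$ for all $\mathbf{x},\mathbf{y}\in\mathbb{R}^{n}$. The key observation is that nondegeneracy of $b$ is equivalent to invertibility of $M$: if $M$ were singular there would be a nonzero $\mathbf{v}$ with $M\mathbf{v}=0$, hence $b(\mathbf{x},\mathbf{v})=0$ for every $\mathbf{x}$, contradicting nondegeneracy, and the converse is immediate.

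With this in hand, the desired identity $\langle \mathbf{x},\mathbf{y}\rangle = b(\mathbf{x},B\mathbf{y})$ becomes $\mathbf{x}^{\top}\mathbf{y} = \mathbf{x}^{\top} M B\mathbf{y}$ for all $\mathbf{x},\mathbf{y}$, where I identify the linear map $B$ with its matrix in the standard basis. Since this must hold for every $\mathbf{x}$ and every $\mathbf{y}$, it is equivalent to the single matrix equation $MB = I_n$. Because $M$ is invertible, this equation has the unique solution $B = M^{-1}$, which is itself invertible and therefore belongs to $\operatorname{Aut}(\mathbb{R}_{b}^{n})$. This one computation delivers existence (take $B=M^{-1}$), uniqueness (the equation $MB=I_n$ pins $B$ down completely), and the required invertibility all at once.

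Alternatively, and more intrinsically, I would argue through the ``flat'' isomorphisms induced by the two forms: nondegeneracy makes $\flat_b:\mathbf{v}\mapsto b(\cdot,\mathbf{v})$ and $\flat:\mathbf{v}\mapsto\langle\cdot,\mathbf{v}\rangle$ linear isomorphisms from $\mathbb{R}^{n}$ onto its dual, and one simply sets $B = \flat_b^{-1}\circ\flat$; the defining identity then reads off directly, while uniqueness follows from injectivity of $\flat_b$. This version has the advantage of being basis-free, at the cost of first verifying that nondegeneracy indeed renders $\flat_b$ bijective.

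There is no substantial obstacle here, as the statement really is a direct consequence of nondegeneracy. The only point deserving a little care is bookkeeping about which argument slot of $b$ is held fixed: the identity as written places $B$ in the second slot, so it is nondegeneracy of $b$ in its second argument that is being invoked, and I would remark that in finite dimensions left-- and right--nondegeneracy coincide (both amount to $\det M\neq 0$), so this choice of convention is harmless.
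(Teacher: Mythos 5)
Your proposal is correct, and it takes exactly the route the paper intends: the paper offers no written proof at all, stating the proposition as ``a direct consequence of the nondegeneracy property'' of $b$, and your Gram-matrix computation (nondegeneracy $\Leftrightarrow$ $\det M \neq 0$, then $MB = I_n$ forces $B = M^{-1}$) is precisely that consequence spelled out, with the flat-map version as an equivalent coordinate-free phrasing. Your closing remark about which slot of $b$ is held fixed, and that left- and right-nondegeneracy coincide in finite dimensions, is a worthwhile point of care that the paper glosses over.
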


Note that naturally associated to a geometric pair $(b,B)$, there exists an invertible linear map, $B^{\star}$, the (left) \textbf{adjoint} of $B$ with respect to $b$, uniquely defined by the relation
\begin{equation*}
b(B^{\star}\mathbf{x}, \mathbf{y})= b(\mathbf{x}, B\mathbf{y}), ~~\forall \mathbf{x},\mathbf{y}\in\mathbb{R}^{n}_{b}.
\end{equation*}

Recall that $\left(B^{\star}\right)^{\star}= B$ if $b$ is symmetric or skew--symmetric, although for a general geometric structure $b$ it could happen that $\left(B^{\star}\right)^{\star}\neq B$. Let us give now some properties of the linear map $B^{\star}$, associated to a geometric pair $(b,B)$.

\begin{proposition}
Let $b$ be a geometric structure on $\mathbb{R}^{n}$, and let $(b,B)$ be the associated geometric pair. Then the following statements hold.
\begin{enumerate}
\item[(i)] $B^{\star}= B^{\top}$, where $B^{\top}$ stands for the adjoint of $B$ with respect to $\langle\cdot,\cdot\rangle$.
\item[(ii)] If $b$ is symmetric then $B^{\star}=B$.
\item[(iii)] If $b$ is skew--symmetric then $B^{\star}= - B$.
\end{enumerate}
\end{proposition}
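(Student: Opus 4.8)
The plan is to prove (i) by a direct manipulation of the two defining relations and then to deduce (ii) and (iii) as immediate consequences. For (i), I would start from the definition of $B^{\star}$ together with the defining property of $B$, which give
\[
b(B^{\star}\mathbf{x},\mathbf{y}) = b(\mathbf{x},B\mathbf{y}) = \langle\mathbf{x},\mathbf{y}\rangle, \qquad \forall\, \mathbf{x},\mathbf{y}\in\mathbb{R}^{n}_{b}.
\]
The key step is to re-express $b$ through the canonical inner product: since $\langle\mathbf{u},\mathbf{v}\rangle = b(\mathbf{u},B\mathbf{v})$ and $B$ is invertible, substituting $\mathbf{w}=B\mathbf{v}$ yields $b(\mathbf{u},\mathbf{w}) = \langle\mathbf{u},B^{-1}\mathbf{w}\rangle$ for all $\mathbf{u},\mathbf{w}$. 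Applying this to the left-hand side above gives $\langle B^{\star}\mathbf{x},B^{-1}\mathbf{y}\rangle = \langle\mathbf{x},\mathbf{y}\rangle$, and moving $B^{-1}$ into the first slot via the Euclidean adjoint turns this into $\langle (B^{-1})^{\top}B^{\star}\mathbf{x},\mathbf{y}\rangle = \langle\mathbf{x},\mathbf{y}\rangle$ for all $\mathbf{x},\mathbf{y}$. Nondegeneracy of $\langle\cdot,\cdot\rangle$ then forces $(B^{-1})^{\top}B^{\star}=\operatorname{Id}$, i.e. $B^{\star}=((B^{\top})^{-1})^{-1}=B^{\top}$, which is (i).

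For (ii) and (iii), I would combine (i) with the fact that the (skew-)symmetry of $b$ is inherited by $B$. The cleanest route is to pass to matrices: writing $\widehat{b}$ for the matrix of $b$ in the canonical basis, the relation $\langle\mathbf{x},\mathbf{y}\rangle=b(\mathbf{x},B\mathbf{y})$ reads $\widehat{b}\,B=\operatorname{Id}$, hence $B=\widehat{b}^{\,-1}$. If $b$ is symmetric then $\widehat{b}^{\top}=\widehat{b}$, so $B^{\top}=(\widehat{b}^{\top})^{-1}=\widehat{b}^{\,-1}=B$, and (i) gives $B^{\star}=B^{\top}=B$; if $b$ is skew-symmetric then $\widehat{b}^{\top}=-\widehat{b}$, so $B^{\top}=-\widehat{b}^{\,-1}=-B$, and (i) gives $B^{\star}=B^{\top}=-B$. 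A coordinate-free variant is equally available: using $\langle\mathbf{x},\mathbf{y}\rangle=\langle\mathbf{y},\mathbf{x}\rangle=b(\mathbf{y},B\mathbf{x})$ and the (skew-)symmetry of $b$ to swap its arguments, one obtains $b(\mathbf{x},B\mathbf{y})=\pm\, b(B\mathbf{x},\mathbf{y})$, whereupon nondegeneracy of $b$ yields $B^{\star}=\pm B$.

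I do not expect a genuine obstacle, since the proposition is essentially bookkeeping in linear algebra; the only points demanding care are keeping the two adjoints distinct --- the $b$-adjoint $B^{\star}$ versus the Euclidean adjoint $B^{\top}$ --- and invoking nondegeneracy at the right moment, each cancellation passing from an identity $b((M-N)\mathbf{x},\mathbf{y})=0$ (or its Euclidean analogue) valid for all $\mathbf{y}$ to the conclusion $M=N$. As a sanity check, (ii) and (iii) recover the remark preceding the statement: in both cases $(B^{\star})^{\star}=B$, since $B^{\star}=\pm B$ and $\star$ is linear.
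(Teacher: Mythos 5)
Your proof is correct, and while part (i) matches the paper's argument, parts (ii) and (iii) take a genuinely different route. For (i), both you and the paper convert $b$ into the canonical inner product via the geometric-pair relation (your identity $b(\mathbf{u},\mathbf{w})=\langle\mathbf{u},B^{-1}\mathbf{w}\rangle$ is exactly the paper's insertion of $BB^{-1}$), move an operator across the Euclidean pairing, and cancel by nondegeneracy; the difference is only in bookkeeping. For (ii) and (iii), the paper argues intrinsically with $b$: it invokes the earlier recalled (and unproved) identity $(B^{\star})^{\star}=B$ for symmetric or skew--symmetric $b$, then chains the defining relations to reach $b(B\mathbf{x},\mathbf{y})=b(\pm B^{\star}\mathbf{x},\mathbf{y})$ and cancels. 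You instead observe that the geometric-pair relation pins $B$ down as the inverse of the Gram matrix of $b$, so that the (skew-)symmetry of $b$ passes directly to $B$ itself, giving $B^{\top}=\pm B$, after which (i) finishes; your coordinate-free variant (swapping arguments via $\langle\mathbf{x},\mathbf{y}\rangle=\langle\mathbf{y},\mathbf{x}\rangle=b(\mathbf{y},B\mathbf{x})$ and the symmetry type of $b$) achieves the same without matrices. Your route buys two things: it is self-contained, never leaning on the recalled identity $(B^{\star})^{\star}=B$ but instead recovering it as a corollary, exactly as your closing sanity check notes; and it isolates the concrete, reusable fact that $B$ is the inverse Gram matrix of $b$, which makes the inheritance of symmetry transparent. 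The paper's route stays purely within the $b$-adjoint calculus and avoids choosing coordinates, at the price of resting on the unproved recalled identity.
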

\begin{proof}
\begin{enumerate}
\item[(i)] In order to prove that $B^{\star}= B^{\top}$, we shall show that $b(B^{\star}\mathbf{x}, \mathbf{y})=b(B^{\top}\mathbf{x}, \mathbf{y}), ~\forall \mathbf{x},\mathbf{y}\in\mathbb{R}^{n}_{b}$.  This follows easily using the definition of a geometric pair. Indeed, for all $\mathbf{x},\mathbf{y}\in\mathbb{R}^{n}_{b}$ we obtain successively
\begin{align*}
b(B^{\top}\mathbf{x}, \mathbf{y})=b(B^{\top}\mathbf{x}, B B^{-1}\mathbf{y})=\langle B^{\top}\mathbf{x},B^{-1}\mathbf{y}\rangle =\langle \mathbf{x},B B^{-1}\mathbf{y}\rangle =\langle \mathbf{x},\mathbf{y}\rangle = b(B^{\star}\mathbf{x}, \mathbf{y}).
\end{align*}
\item[(ii)] As $b$ is symmetric we have that $(B^{\star})^{\star}=B$. Now, in order to prove that $B^{\star}= B$, we shall show that $b(B^{\star}\mathbf{x}, \mathbf{y})=b(B\mathbf{x}, \mathbf{y}), ~\forall \mathbf{x},\mathbf{y}\in\mathbb{R}^{n}_{b}$. This follows easily using the definition of a geometric pair and the symmetry of $b$. Indeed, for all $\mathbf{x},\mathbf{y}\in\mathbb{R}^{n}_{b}$ we obtain successively
\begin{align*}
b(B\mathbf{x}, \mathbf{y})= b((B^{\star})^{\star}\mathbf{x},\mathbf{y})=  b(\mathbf{x},B^{\star}\mathbf{y})= b(B^{\star}\mathbf{y},\mathbf{x})=\langle \mathbf{y},\mathbf{x}\rangle =\langle \mathbf{x},\mathbf{y}\rangle =b(B^{\star}\mathbf{x}, \mathbf{y}).
\end{align*}
\item[(iii)] As $b$ is skew--symmetric we have that $(B^{\star})^{\star}=B$. In order to prove that $B^{\star}= - B$, we shall show that $b(B^{\star}\mathbf{x}, \mathbf{y})= - b(B\mathbf{x}, \mathbf{y}), ~\forall \mathbf{x},\mathbf{y}\in\mathbb{R}^{n}_{b}$. This time we will use the definition of a geometric pair and the skew-symmetry of $b$. Indeed, for all $\mathbf{x},\mathbf{y}\in\mathbb{R}^{n}_{b}$ we obtain successively
\begin{align*}
b(B\mathbf{x}, \mathbf{y})&= b((B^{\star})^{\star}\mathbf{x},\mathbf{y})=  b(\mathbf{x},B^{\star}\mathbf{y})= - b(B^{\star}\mathbf{y},\mathbf{x})= - \langle \mathbf{y},\mathbf{x}\rangle = - \langle \mathbf{x},\mathbf{y}\rangle\\
& = - b(B^{\star}\mathbf{x}, \mathbf{y}).
\end{align*}
\end{enumerate}
\end{proof}

\begin{remark}
Note that even if $B^{\star}= B^{\top}$, this equality is not necessary true for each invertible linear map defined on $\mathbb{R}^{n}_{b}$. This is mainly due to the fact that the map $B$ together with the geometric structure $b$ form the geometric pair $(b,B)$. Thus, the equality $B^{\star}= B^{\top}$ does not imply in general that $(B^{\star})^{\star}=B$. Actually, one can easily show that given a geometric pair $(b,B)$, the equality $(B^{\star})^{\star}=B$ holds true if and only if $B$ is $b-$normal, i.e. $BB^{\star}=B^{\star}B$.
\end{remark}

Given a geometric structure $b$ on $\mathbb{R}^n$, we introduce two gradient--like operators, namely, the left--gradient, $\nabla^{L}_{b}$, and respectively, the right--gradient, $\nabla^{R}_{b}$, uniquely defined for any function $F\in\mathcal{C}^{1}(\mathbb{R}^{n}_{b},\mathbb{R})$ by the relation
\begin{equation}\label{lg}
b(\nabla^{L}_{b}F(\mathbf{x}),\mathbf{v})=\mathrm{d}F(\mathbf{x})\cdot \mathbf{v}, ~ \forall \mathbf{x},\mathbf{v}\in\mathbb{R}^{n}_{b},
\end{equation}
and respectively 
\begin{equation}\label{rg}
b(\mathbf{v},\nabla^{R}_{b}F(\mathbf{x}))=\mathrm{d}F(\mathbf{x})\cdot \mathbf{v}, ~ \forall \mathbf{x},\mathbf{v}\in\mathbb{R}^{n}_{b}.
\end{equation}

Note that in the case when $b$ is symmetric, $\nabla^{L}_{b}=\nabla^{R}_{b}$. Moreover, when $b$ equals the canonical inner product on $\mathbb{R}^n$, we have the equality $\nabla^{L}_{b}=\nabla^{R}_{b}=\nabla$, where $\nabla$ stands for the classical gradient operator on $\mathbb{R}^n$, defined for any function $F\in\mathcal{C}^{1}(\mathbb{R}^{n}_{b},\mathbb{R})$ by the relation
\begin{equation}\label{cg}
\langle\nabla F(\mathbf{x}),\mathbf{v}\rangle=\mathrm{d}F(\mathbf{x})\cdot \mathbf{v}, ~ \forall \mathbf{x},\mathbf{v}\in\mathbb{R}^{n}_{b}.
\end{equation}

On the other hand, in the case when $b$ is skew--symmetric, it follows that $\nabla^{L}_{b}= - \nabla^{R}_{b}$. 

Consequently, for $b$ symmetric or skew--symmetric, we have simple relations between $\nabla^{L}_{b}$ and $\nabla^{R}_{b}$ which can be unified in the following notation:
$$\nabla_{b}H:=\left\{\begin{array}{ll}\nabla^{L}_{b}H=\nabla^{R}_{b}H, & \mbox{if b is symmetric} \\
\nabla^{L}_{b}H=-\nabla^{R}_{b}H, & \mbox{if b is skew--symmetric}
\end{array}\right..$$

Even for a general geometric structure $b$, the gradient--like operators, $\nabla^{L}_{b}$, $\nabla^{R}_{b}$ are also related in a natural way, as we can see in the following proposition.

\begin{proposition}
Let $b$ be a geometric structure on $\mathbb{R}^{n}$, and let $(b,B)$ be the associated geometric pair. For every $F\in\mathcal{C}^{1}(\mathbb{R}^{n}_{b},\mathbb{R})$, the following relations hold true.
\begin{enumerate}
\item[(i)] $\nabla^{R}_{b}F=B \nabla F$,
\item[(ii)] $\nabla^{L}_{b}F=B^{\star}\nabla F$,
\item[(iii)] $\nabla^{R}_{b}F=B(B^{\star})^{-1} \nabla^{L}_{b}F$.
\end{enumerate}
\end{proposition}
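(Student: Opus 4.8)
The plan is to verify the three identities one at a time, in the order (i), (ii), (iii), by plugging each proposed candidate into the defining relation of the corresponding gradient operator and then simplifying via the geometric pair relation $\langle \mathbf{x},\mathbf{y}\rangle = b(\mathbf{x},B\mathbf{y})$ and the adjoint relation $b(B^{\star}\mathbf{x},\mathbf{y})=b(\mathbf{x},B\mathbf{y})$. In each case the conclusion is sealed by uniqueness: since $b$ is nondegenerate, the vector characterized by \eqref{lg} (respectively \eqref{rg}) is unique, so it suffices to exhibit one vector satisfying the required relation for all $\mathbf{v}$.

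For (i), I would show that $B\nabla F(\mathbf{x})$ fulfills the defining property \eqref{rg} of $\nabla^{R}_{b}F(\mathbf{x})$. Concretely, for every $\mathbf{v}\in\mathbb{R}^{n}_{b}$ one computes $b(\mathbf{v},B\nabla F(\mathbf{x}))=\langle \mathbf{v},\nabla F(\mathbf{x})\rangle$ by the geometric pair relation (taking the second argument to be $\nabla F(\mathbf{x})$), and then uses the symmetry of $\langle\cdot,\cdot\rangle$ together with \eqref{cg} to get $\langle \mathbf{v},\nabla F(\mathbf{x})\rangle=\langle \nabla F(\mathbf{x}),\mathbf{v}\rangle=\mathrm{d}F(\mathbf{x})\cdot\mathbf{v}$. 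Hence $B\nabla F$ satisfies \eqref{rg}, and uniqueness gives $\nabla^{R}_{b}F=B\nabla F$. For (ii), the argument is parallel but now invokes the adjoint: one checks that $B^{\star}\nabla F(\mathbf{x})$ satisfies \eqref{lg}, since $b(B^{\star}\nabla F(\mathbf{x}),\mathbf{v})=b(\nabla F(\mathbf{x}),B\mathbf{v})=\langle \nabla F(\mathbf{x}),\mathbf{v}\rangle=\mathrm{d}F(\mathbf{x})\cdot\mathbf{v}$, where the middle step is exactly the adjoint relation and the following step is the geometric pair relation. Uniqueness then yields $\nabla^{L}_{b}F=B^{\star}\nabla F$.

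Finally, (iii) is a purely algebraic consequence of (i) and (ii): from (ii), and using that $B^{\star}$ is invertible (so $(B^{\star})^{-1}$ makes sense), we may solve for the classical gradient as $\nabla F=(B^{\star})^{-1}\nabla^{L}_{b}F$, and substituting this into (i) gives $\nabla^{R}_{b}F=B\nabla F=B(B^{\star})^{-1}\nabla^{L}_{b}F$.

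I do not expect a genuine obstacle here, as the whole proposition reduces to careful bookkeeping. The only point demanding real attention is keeping straight the slot in which each gradient sits inside $b$: the left gradient occupies the \emph{first} argument of $b$ while the right gradient occupies the \emph{second}, which is precisely why (i) pairs with the plain $B$ and (ii) with its adjoint $B^{\star}$. Getting this asymmetry right — and remembering to use the symmetry of the canonical inner product $\langle\cdot,\cdot\rangle$ at the appropriate moment in (i) — is the entirety of the care required.
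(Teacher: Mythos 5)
Your proof is correct. It takes a slightly different route from the paper's: the paper \emph{derives} the formulas by translating the defining relations into the canonical inner product --- for (i) it rewrites $b(\mathbf{v},\nabla^{R}_{b}F(\mathbf{x}))$ as $\langle(B^{\star})^{-1}\mathbf{v},\nabla^{R}_{b}F(\mathbf{x})\rangle$, invokes the previously proved identity $B^{\star}=B^{\top}$ to move the factor across $\langle\cdot,\cdot\rangle$, and concludes $B^{-1}\nabla^{R}_{b}F=\nabla F$ from nondegeneracy of the inner product --- whereas you \emph{verify} the candidates: you check that $B\nabla F$ satisfies \eqref{rg} and that $B^{\star}\nabla F$ satisfies \eqref{lg}, using only the geometric-pair relation, the defining relation of the adjoint (for (ii)), and \eqref{cg}, and then finish by uniqueness coming from nondegeneracy of $b$. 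Your route is more economical and self-contained: it never needs transposes or the identity $B^{\star}=B^{\top}$, and it stays intrinsic to $b$ rather than passing through $\langle\cdot,\cdot\rangle$. What the paper's route buys is that it solves for the gradient rather than confirming a guessed answer, and it keeps the matrix identity $B^{\star}=B^{\top}$ in circulation, which is precisely the form in which these operators are manipulated later (e.g.\ in the proof of Theorem \ref{Poincare}). Both arguments handle (iii) identically, as a purely algebraic consequence of (i) and (ii), and your appeal to invertibility of $B^{\star}$ there is legitimate since the paper defines $B^{\star}$ as an invertible linear map.
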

\begin{proof}
\begin{enumerate}
\item[(i)] Using the relation \eqref{rg} and the fact that $(b,B)$ is a geometric pair, we obtain that
\begin{equation*}
\mathrm{d}F(\mathbf{x})\cdot \mathbf{v}=b(\mathbf{v},\nabla^{R}_{b}F(\mathbf{x}))=\langle(B^{\star})^{-1}\mathbf{v},\nabla^{R}_{b}F(\mathbf{x})\rangle, \forall \mathbf{x},\mathbf{v}\in\mathbb{R}^{n}_{b}.
\end{equation*}

The above relation together with the identity \eqref{cg} imply that
\begin{equation}\label{rl2}
\langle(B^{\star})^{-1}\mathbf{v},\nabla^{R}_{b}F(\mathbf{x})\rangle=\langle\mathbf{v},\nabla F(\mathbf{x})\rangle, \forall \mathbf{x},\mathbf{v}\in\mathbb{R}^{n}_{b}.
\end{equation}

As $B^{\star}=B^{\top}$, the relation \eqref{rl2} becomes
\begin{equation*}
\langle\mathbf{v},B^{-1}\nabla^{R}_{b}F(\mathbf{x})\rangle=\langle\mathbf{v},\nabla F(\mathbf{x})\rangle, \forall \mathbf{x},\mathbf{v}\in\mathbb{R}^{n}_{b},
\end{equation*}
and so, we get $\nabla^{R}_{b}F=B \nabla F$.
\item[(ii)] The proof is similar to the one of item $(i)$, this time using the relation \eqref{lg} and the fact that $(b,B)$ is a geometric pair.
\item[(iii)] The proof follows directly from $(i)$ and $(ii)$.
\end{enumerate}
\end{proof}

In the following we present the global Poincar\'e lemma in the case of left--gradient and right--gradient vector fields associated to a general geometric structure $b$ on $\mathbb{R}^n$. As particular cases, we recover the classical characterizations of the gradient and the Hamiltonian vector fields.

\begin{theorem}\label{Poincare}
Let $X\in\mathfrak{X}(\mathbb{R}^{n})$ be a vector field on class $\mathcal{C}^{1}$ on $\mathbb{R}^{n}$. Let $b$ be a geometric structure on $\mathbb{R}^{n}$, and let $(b,B)$ be the associated geometric pair. Then the following assertions hold true.
\begin{enumerate}
\item[(i)] $X=\nabla^{L}_{b}H$ for some $H\in\mathcal{C}^{2}(\mathbb{R}_{b}^{n},\mathbb{R})$ if and only if
\begin{equation}\label{relunu}
(\mathrm{D}X(\mathbf{x}))^{\top}B^{-1}=(B^{\star})^{-1}\mathrm{D}X(\mathbf{x}), ~ \forall \mathbf{x}\in\mathbb{R}_{b}^{n}.
\end{equation}
Moreover, if $X$ satisfies \eqref{relunu} then the solution of the equation $X=\nabla^{L}_{b}H$ is given by $$H(\mathrm{x})=\int_{0}^{1}b(X(t\mathbf{x}),\mathbf{x})\mathrm{d}t+ constant, ~ \forall \mathbf{x}\in\mathbb{R}_{b}^{n}.$$
\item[(ii)] $X=\nabla^{R}_{b}H$ for some $H\in\mathcal{C}^{2}(\mathbb{R}_{b}^{n},\mathbb{R})$ if and only if
\begin{equation}\label{reldoi}
(\mathrm{D}X(\mathbf{x}))^{\top}(B^{\star})^{-1}=B^{-1}\mathrm{D}X(\mathbf{x}), ~ \forall \mathbf{x}\in\mathbb{R}_{b}^{n}.
\end{equation}
Moreover, if $X$ satisfies \eqref{reldoi} then the solution of the equation $X=\nabla^{R}_{b}H$ is given by  $$H(\mathrm{x})=\int_{0}^{1}b(\mathbf{x},X(t\mathbf{x}))\mathrm{d}t+ constant, ~ \forall \mathbf{x}\in\mathbb{R}_{b}^{n}.$$
\item[(iii)] If $b$ is symmetric, then $X=\nabla_{b}H$ for some $H\in\mathcal{C}^{2}(\mathbb{R}_{b}^{n},\mathbb{R})$ if and only if
\begin{equation}\label{reltrei}
(\mathrm{D}X(\mathbf{x}))^{\top}B^{-1}=B^{-1}\mathrm{D}X(\mathbf{x}), ~ \forall \mathbf{x}\in\mathbb{R}_{b}^{n}.
\end{equation}
Moreover, if $X$ satisfies \eqref{reltrei} then the solution of the equation $X=\nabla_{b}H$ is given by $$H(\mathrm{x})=\int_{0}^{1}b(X(t\mathbf{x}),\mathbf{x})\mathrm{d}t+ constant, ~ \forall \mathbf{x}\in\mathbb{R}_{b}^{n}.$$
\item[(iv)] If $n$ is even and $b$ is skew--symmetric (i.e. $b$ is a symplectic form), then $X=\nabla_{b}H$ for some $H\in\mathcal{C}^{2}(\mathbb{R}_{b}^{n},\mathbb{R})$ if and only if
\begin{equation}\label{relpatru}
(\mathrm{D}X(\mathbf{x}))^{\top}B^{-1} + B^{-1}\mathrm{D}X(\mathbf{x}) = O_{n}, ~ \forall \mathbf{x}\in\mathbb{R}_{b}^{n}.
\end{equation}
Moreover, if $X$ satisfies \eqref{relpatru} then the solution of the equation $X=\nabla_{b}H$ is given by $$H(\mathrm{x})=\int_{0}^{1}b(X(t\mathbf{x}),\mathbf{x})\mathrm{d}t+ constant, ~ \forall \mathbf{x}\in\mathbb{R}_{b}^{n}.$$

Here $\nabla_{b}H$ is precisely the Hamiltonian vector field of $H$ with respect to the symplectic form $b$, i.e. $\nabla_{b}H = X_H$.
\end{enumerate}
\end{theorem}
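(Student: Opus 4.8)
The plan is to reduce every assertion to the classical global Poincar\'e lemma on $\mathbb{R}^{n}$, which states that a $\mathcal{C}^{1}$ vector field $Y$ is a Euclidean gradient, $Y=\nabla H$ for some $H\in\mathcal{C}^{2}$, if and only if its Jacobian is symmetric, i.e. $(\mathrm{D}Y(\mathbf{x}))^{\top}=\mathrm{D}Y(\mathbf{x})$ for all $\mathbf{x}$, and in that case a potential is recovered by integrating along rays, $H(\mathbf{x})=\int_{0}^{1}\langle Y(t\mathbf{x}),\mathbf{x}\rangle\mathrm{d}t+\text{const}$. Since $\mathbb{R}^{n}$ is star--shaped with respect to the origin, this is exactly the setting in which the ray integral produces a global primitive. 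The bridge to the geometric operators is furnished by the previous proposition, namely $\nabla^{L}_{b}F=B^{\star}\nabla F$ and $\nabla^{R}_{b}F=B\nabla F$, together with the identity $b(\mathbf{u},\mathbf{w})=\langle\mathbf{u},B^{-1}\mathbf{w}\rangle$, which follows directly from the defining relation $\langle\mathbf{x},\mathbf{y}\rangle=b(\mathbf{x},B\mathbf{y})$ of the geometric pair.

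For item (i), I would introduce the auxiliary field $Y:=(B^{\star})^{-1}X$. Because $\nabla^{L}_{b}H=B^{\star}\nabla H$, the equation $X=\nabla^{L}_{b}H$ is equivalent to $Y=\nabla H$, so the classical lemma applies verbatim to $Y$; it then remains to translate its two conclusions back in terms of $X$. Since $(B^{\star})^{-1}$ is a constant linear map, $\mathrm{D}Y(\mathbf{x})=(B^{\star})^{-1}\mathrm{D}X(\mathbf{x})$, and using $B^{\star}=B^{\top}$ one computes $(\mathrm{D}Y)^{\top}=(\mathrm{D}X)^{\top}B^{-1}$; hence the symmetry $(\mathrm{D}Y)^{\top}=\mathrm{D}Y$ is precisely condition \eqref{relunu}. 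For the potential, substituting $Y=(B^{\star})^{-1}X$ into the ray integral and moving $(B^{\star})^{-1}=(B^{\top})^{-1}$ across the inner product gives $\langle(B^{\top})^{-1}X(t\mathbf{x}),\mathbf{x}\rangle=\langle X(t\mathbf{x}),B^{-1}\mathbf{x}\rangle=b(X(t\mathbf{x}),\mathbf{x})$, which is exactly the stated formula.

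Item (ii) is handled symmetrically, this time with $Y:=B^{-1}X$, so that $X=\nabla^{R}_{b}H$ is equivalent to $Y=\nabla H$ via $\nabla^{R}_{b}H=B\nabla H$; the same two computations yield \eqref{reldoi} and the primitive $H(\mathbf{x})=\int_{0}^{1}b(\mathbf{x},X(t\mathbf{x}))\mathrm{d}t+\text{const}$. Finally, items (iii) and (iv) are specializations. When $b$ is symmetric one has $B^{\star}=B$, so $(B^{\star})^{-1}=B^{-1}$ and \eqref{relunu} collapses to \eqref{reltrei}, while the two ray integrals coincide by symmetry of $b$; when $b$ is skew--symmetric one has $B^{\star}=-B$, so $(B^{\star})^{-1}=-B^{-1}$ turns \eqref{relunu} into \eqref{relpatru}, and the convention $\nabla_{b}=\nabla^{L}_{b}$ gives the stated potential. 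I expect the only genuinely delicate point to be the transpose bookkeeping in the symmetry conditions---keeping straight that $((B^{\top})^{-1})^{\top}=B^{-1}$ and that $\mathrm{D}(AX)=A\,\mathrm{D}X$ for constant $A$---since a single misplaced transpose would swap the roles of $B$ and $B^{\star}$ and thereby confuse conditions (i) and (ii).
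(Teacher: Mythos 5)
Your proposal is correct and follows essentially the same route as the paper: both reduce to the classical Poincar\'e lemma via the identities $\nabla^{L}_{b}H=B^{\star}\nabla H$ and $\nabla^{R}_{b}H=B\nabla H$, translate symmetry of the Jacobian of the auxiliary field into \eqref{relunu}/\eqref{reldoi} using $B^{\star}=B^{\top}$, and obtain (iii), (iv) by specializing $B^{\star}=\pm B$. The only difference is cosmetic: the paper re-verifies by a direct computation that $\int_{0}^{1}b(X(t\mathbf{x}),\mathbf{x})\,\mathrm{d}t$ is a potential (and separately argues uniqueness up to a constant), whereas you import the ray-integral formula from the classical lemma and simply rewrite $\langle (B^{\top})^{-1}X(t\mathbf{x}),\mathbf{x}\rangle$ as $b(X(t\mathbf{x}),\mathbf{x})$.
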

\begin{proof}
\begin{enumerate}
\item[(i)] As $\nabla^{L}_{b}F = B^{\star}\nabla F, ~ \forall F\in\mathcal{C}^{2}(\mathbb{R}_{b}^{n},\mathbb{R})$, it follows that $X=\nabla^{L}_{b}H$ for some $H\in\mathcal{C}^{2}(\mathbb{R}_{b}^{n},\mathbb{R})$ if and only if $(B^{\star})^{-1}X= \nabla H$ for some $H\in\mathcal{C}^{2}(\mathbb{R}_{b}^{n},\mathbb{R})$. Due to the classical Poincar\'e lemma, the former equality holds true if and only if $\mathrm{D}((B^\star)^{-1}X)(\mathbf{x})=[\mathrm{D}((B^{\star})^{-1}X)(\mathbf{x})]^{\top}$, $\forall\mathbf{x}\in\mathbb{R}_{b}^{n}$ which is in turn equivalent to $(\mathrm{D}X(\mathbf{x}))^{\top}B^{-1}=(B^{\star})^{-1}\mathrm{D}X(\mathbf{x})$, $\forall \mathbf{x}\in\mathbb{R}_{b}^{n}$; taking into account that $B^{\top}=B^{\star}$.

Note that if $H$ satisfies $X=\nabla^{L}_{b}H$, then for each arbitrary fixed $\mathbf{x}\in\mathbb{R}^n$ we have that
\begin{align*}
H(\mathbf{x})- H(\mathbf{0})&=\int_{0}^{1}\dfrac{\mathrm{d}}{\mathrm{d}t}H(t\mathbf{x})\mathrm{d}t = \int_{0}^{1}\mathrm{d}H(t\mathbf{x})\cdot \mathbf{x}\mathrm{d}t = \int_{0}^{1} b( \nabla_{b}^{L} H(t\mathbf{x}) , \mathbf{x}) \mathrm{d}t \\
&=\int_{0}^{1} b( X(t\mathbf{x}) , \mathbf{x}) \mathrm{d}t.
\end{align*}

Given a vector field $X$ such that the relation \eqref{relunu} holds true, we check that the function $H(\mathrm{x})=\int_{0}^{1}b(X(t\mathbf{x}),\mathbf{x})\mathrm{d}t, ~ \forall \mathbf{x}\in\mathbb{R}_{b}^{n}$ is a solution of the equation  $X=\nabla^{L}_{b}H$.

Indeed, as $(b,B)$ is a geometric pair, $H$ can be equivalently written as $H(\mathbf{x})=\int_{0}^{1}\langle X(t\mathbf{x}),B^{-1}\mathbf{x}\rangle\mathrm{d}t, ~ \forall \mathbf{x}\in\mathbb{R}_{b}^{n}$. Thus, using the relation \eqref{relunu}, and the fact that $B^{\star}=B^{\top}$, we obtain successively the following equalities valid for all $\mathbf{x},\mathbf{v}\in\mathbb{R}^n$:

\begin{align*}
b(\nabla_{b}^{L}H(\mathbf{x}),\mathbf{v})&=\mathrm{d}H(\mathbf{x})\cdot \mathbf{v} = \int_{0}^{1}\left(\langle \mathrm{D}X(t\mathbf{x})\cdot t\mathbf{v} ,B^{-1}\mathbf{x}\rangle + \langle X(t\mathbf{x}) ,B^{-1}\mathbf{v} \rangle\right)\mathrm{d}t \\
&= \int_{0}^{1}\left(   t \mathbf{v}^{\top} (\mathrm{D}X(t\mathbf{x}))^{\top}B^{-1}\mathbf{x}    + \langle X(t\mathbf{x}) ,B^{-1}\mathbf{v} \rangle\right)\mathrm{d}t\\
&=\int_{0}^{1}\left(   t \mathbf{v}^{\top} (B^{\star})^{-1}\mathrm{D}X(t\mathbf{x})\mathbf{x}    + \langle X(t\mathbf{x}) ,B^{-1}\mathbf{v} \rangle\right)\mathrm{d}t\\
&=\int_{0}^{1}\left(   t \mathbf{v}^{\top} (B^{\top})^{-1}\mathrm{D}X(t\mathbf{x})\mathbf{x}    + \langle X(t\mathbf{x}) ,B^{-1}\mathbf{v} \rangle\right)\mathrm{d}t\\
&=\int_{0}^{1}\left(   t \mathbf{v}^{\top} (B^{-1})^{\top}\mathrm{D}X(t\mathbf{x})\mathbf{x}    + \langle X(t\mathbf{x}) ,B^{-1}\mathbf{v} \rangle\right)\mathrm{d}t\\
&=\int_{0}^{1}\left(\langle t B^{-1}\mathbf{v}, \mathrm{D}X(t\mathbf{x})\cdot \mathbf{x} \rangle + \langle X(t\mathbf{x}) ,B^{-1}\mathbf{v} \rangle\right)\mathrm{d}t\\
&=\int_{0}^{1}\left(\langle t\mathrm{D}X(t\mathbf{x})\cdot \mathbf{x}, B^{-1}\mathbf{v}\rangle + \langle X(t\mathbf{x}) ,B^{-1}\mathbf{v} \rangle\right)\mathrm{d}t\\
&=\int_{0}^{1}\langle t\mathrm{D}X(t\mathbf{x})\cdot \mathbf{x}+ X(t\mathbf{x}) , B^{-1}\mathbf{v}\rangle\mathrm{d}t \\
&= \langle \int_{0}^{1}(t\mathrm{D}X(t\mathbf{x})\cdot \mathbf{x}+ X(t\mathbf{x}))\mathrm{d}t , B^{-1}\mathbf{v}\rangle\\
&=\langle\int_{0}^{1}\dfrac{\mathrm{d}}{\mathrm{d}t}\left[t X(t\mathbf{x})\right]\mathrm{d}t,B^{-1}\mathbf{v}\rangle = \langle X(\mathbf{x}),B^{-1}\mathbf{v}\rangle = b(X(\mathbf{x}),\mathbf{v}),
\end{align*}
and hence we get $X=\nabla^{L}_{b}H$.

Now, if $H_1, H_2$ are two solutions of the equation $X=\nabla^{L}_{b}H$, then $\nabla H_1 (\mathbf{x}) =\nabla H_2 (\mathbf{x}), ~ \forall \mathbf{x}\in\mathbb{R}_{b}^{n}$, and hence $H_1(\mathbf{x}) - H_2 (\mathbf{x}) = constant, ~\forall\mathbf{x}\in\mathbb{R}_{b}^{n} $. 

\item[(ii)] The proof follows mimetically the one of item $(i)$.

\item[(iii)] The proof follows directly from $(i)$ taking into account that $B^{\star}=B$ if $b$ is symmetric.

\item[(iv)] The proof follows directly from $(ii)$ taking into account that $B^{\star}= - B$ if $b$ is skew--symmetric.
\end{enumerate}
\end{proof}

Next, we provide a geometric meaning of the solvability conditions, \eqref{relunu}, \eqref{reldoi}, given by the Poincar\'e lemma. In order to do that we need to introduce first some terminology.

\begin{definition}
Let $b$ be a geometric structure on $\mathbb{R}^{n}$, and let $(b,B)$ be the associated geometric pair. A linear map $A:\mathbb{R}_{b}^{n}\rightarrow \mathbb{R}_{b}^{n}$ will be called \textbf{left $(b,B)-$symmetric} if
\begin{equation*}
b(A\mathbf{u},\mathbf{v})=b(B^{\star}B^{-1}\mathbf{u},A\mathbf{v}), ~ \forall\mathbf{u},\mathbf{v}\in\mathbb{R}_{b}^{n},
\end{equation*}
respectively, \textbf{right $(b,B)-$symmetric} if
\begin{equation*}
b(A\mathbf{u},B(B^{\star})^{-1}\mathbf{v})=b(\mathbf{u},A\mathbf{v}), ~ \forall\mathbf{u},\mathbf{v}\in\mathbb{R}_{b}^{n}.
\end{equation*}
\end{definition}

\begin{remark}
The condition \eqref{relunu} means that $DX(\mathbf{x})$ is left $(b,B)-$symmetric for all $\mathbf{x}\in\mathbb{R}_{b}^{n}$, whereas the condition \eqref{reldoi} means that $DX(\mathbf{x})$ is right $(b,B)-$symmetric for all $\mathbf{x}\in\mathbb{R}_{b}^{n}$.
\end{remark}

As the left/right $(b,B)-$symmetry represents actually the solvability condition provided by the Poincar\'e lemma, we shall analyze further this feature by considering the set of all linear maps which share this important property. In order to do that, let us denote by
$$
\operatorname{Sym}_{n}^{L}(b,B):=\{A\in\operatorname{End}(\mathbb{R}_{b}^n): ~ b(A\mathbf{u},\mathbf{v})=b(B^{\star}B^{-1}\mathbf{u},A\mathbf{v}), ~ \forall\mathbf{u},\mathbf{v}\in\mathbb{R}_{b}^{n}\},
$$
the set of left $(b,B)-$symmetric linear maps on $\mathbb{R}_{b}^{n}$. Analogously, we denote by
$$
\operatorname{Sym}_{n}^{R}(b,B):=\{A\in\operatorname{End}(\mathbb{R}_{b}^n): ~ b(A\mathbf{u},B(B^{\star})^{-1}\mathbf{v})=b(\mathbf{u},A\mathbf{v}), ~ \forall\mathbf{u},\mathbf{v}\in\mathbb{R}_{b}^{n}\},
$$
the set of right $(b,B)-$symmetric linear maps on $\mathbb{R}_{b}^{n}$.

Note that both sets, $\operatorname{Sym}_{n}^{L}(b,B)$, $\operatorname{Sym}_{n}^{R}(b,B)$, are linear subspaces of the real vector space $\operatorname{End}(\mathbb{R}_{b}^n)$, consisting of all $\mathbb{R}-$linear maps $A: \mathbb{R}_{b}^{n}\rightarrow \mathbb{R}_{b}^{n}$. As $\operatorname{End}(\mathbb{R}_{b}^n)$ together with the commutator $[\cdot,\cdot]$ (given by $[A,A']:= AA'- A'A, ~ \forall A,A'\in \operatorname{End}(\mathbb{R}_{b}^n)$), becomes a Lie algebra, it is natural to analyze some related properties with respect to the subspaces $\operatorname{Sym}_{n}^{L}(b,B)$, $\operatorname{Sym}_{n}^{R}(b,B)$.

In order to do that, we introduce two more brackets on $\operatorname{End}(\mathbb{R}_{b}^n)$, induced by the geometric pair $(b,B)$. More precisely, one can easily check that each of the following brackets
\begin{align*}
[A,A']_{(b,B)}^{L}:&=AB(B^{\star})^{-1}A' - A'B(B^{\star})^{-1}A, ~ \forall A,A' \in \operatorname{End}(\mathbb{R}_{b}^n),\\
[A,A']_{(b,B)}^{R}:&=AB^{\star}B^{-1}A' - A'B^{\star}B^{-1}A, ~ \forall A,A' \in \operatorname{End}(\mathbb{R}_{b}^n),
\end{align*}
defines a Lie algebra structure on the vector space $\operatorname{End}(\mathbb{R}_{b}^n)$.

Some straightforward computations lead to interesting compatibility relations between the above defined brackets and the commutator bracket, synthesized in the following result.

\begin{proposition} Let $b$ be a geometric structure on $\mathbb{R}^{n}$ and let $(b,B)$ be the associated geometric pair. Then the following assertions hold true.
\begin{enumerate}
\item[(i)] For every $A,A' \in \operatorname{Sym}_{n}^{L}(b,B)$, we have
\begin{equation}\label{liestg}
b([A,A']_{(b,B)}^{L}\mathbf{u},\mathbf{v})= b(B^{\star}B^{-1}\mathbf{u},-[A,A']\mathbf{v}), ~ \forall \mathbf{u},\mathbf{v}\in\mathbb{R}_{b}^{n}.
\end{equation}
\item[(ii)] For every $A,A' \in \operatorname{Sym}_{n}^{R}(b,B)$, we have
\begin{equation}\label{liedr}
b(-[A,A']\mathbf{u},B(B^{\star})^{-1}\mathbf{v})=b(\mathbf{u},[A,A']_{(b,B)}^{R}\mathbf{v}), ~ \forall \mathbf{u},\mathbf{v}\in\mathbb{R}_{b}^{n}.
\end{equation}
\end{enumerate}
\end{proposition}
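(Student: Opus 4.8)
The plan is to handle both identities by a single mechanism: use the defining symmetry relations of $\operatorname{Sym}_n^L(b,B)$ and $\operatorname{Sym}_n^R(b,B)$ to transport each operator factor across the form $b$, observing that the two auxiliary operators appearing in the statement are mutual inverses. Concretely, I would first set $C := B(B^\star)^{-1}$ and record that $C^{-1} = B^\star B^{-1}$, since $(B(B^\star)^{-1})^{-1} = B^\star B^{-1}$. With this abbreviation the left--symmetry condition reads $b(A\mathbf{u},\mathbf{v}) = b(C^{-1}\mathbf{u}, A\mathbf{v})$, the right--symmetry condition reads $b(A\mathbf{u}, C\mathbf{v}) = b(\mathbf{u}, A\mathbf{v})$, and the two brackets become $[A,A']_{(b,B)}^{L} = ACA' - A'CA$ and $[A,A']_{(b,B)}^{R} = AC^{-1}A' - A'C^{-1}A$.

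For item (i), I would evaluate $b(ACA'\mathbf{u},\mathbf{v})$ by applying the left--symmetry relation to the outer factor $A$, which yields $b(C^{-1}CA'\mathbf{u}, A\mathbf{v}) = b(A'\mathbf{u}, A\mathbf{v})$; the cancellation $C^{-1}C = \mathrm{id}$ is precisely where the mutual-inverse observation enters. A second application of the relation, now to $A'$, gives $b(C^{-1}\mathbf{u}, A'A\mathbf{v})$. Interchanging the roles of $A$ and $A'$ produces $b(A'CA\mathbf{u},\mathbf{v}) = b(C^{-1}\mathbf{u}, AA'\mathbf{v})$, and subtracting the two yields $b([A,A']_{(b,B)}^{L}\mathbf{u},\mathbf{v}) = b(C^{-1}\mathbf{u}, (A'A - AA')\mathbf{v}) = b(B^\star B^{-1}\mathbf{u}, -[A,A']\mathbf{v})$, which is exactly \eqref{liestg}.

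Item (ii) is entirely parallel, reading the symmetry relation from right to left. I would compute $b(\mathbf{u}, AC^{-1}A'\mathbf{v})$ by applying the right--symmetry relation to the factor $A$ sitting in the second slot, obtaining $b(A\mathbf{u}, CC^{-1}A'\mathbf{v}) = b(A\mathbf{u}, A'\mathbf{v})$, and then once more to $A'$ to reach $b(A'A\mathbf{u}, C\mathbf{v})$. The companion identity $b(\mathbf{u}, A'C^{-1}A\mathbf{v}) = b(AA'\mathbf{u}, C\mathbf{v})$ follows by swapping $A$ and $A'$, so subtraction gives $b(\mathbf{u}, [A,A']_{(b,B)}^{R}\mathbf{v}) = b((A'A - AA')\mathbf{u}, C\mathbf{v}) = b(-[A,A']\mathbf{u}, B(B^\star)^{-1}\mathbf{v})$, namely \eqref{liedr}.

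I do not expect a genuine obstacle here: the content is bookkeeping. The single idea that makes everything collapse is recognizing that $B(B^\star)^{-1}$ and $B^\star B^{-1}$ are inverse to one another, so that each double application of the symmetry relation telescopes through a factor $CC^{-1}$ (or $C^{-1}C$) to the identity. Without this observation the intermediate terms would not simplify, and the ordinary commutator $[A,A']$ on one side would not pair with the twisted bracket on the other; the only care needed is to keep the order of $A$ and $A'$ straight so that the sign $-[A,A']$ emerges correctly.
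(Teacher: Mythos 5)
Your proof is correct: both telescoping computations check out, since $b(ACA'\mathbf{u},\mathbf{v})=b(C^{-1}\mathbf{u},A'A\mathbf{v})$ and $b(\mathbf{u},AC^{-1}A'\mathbf{v})=b(A'A\mathbf{u},C\mathbf{v})$ follow from two applications of the respective symmetry condition, and subtraction with the roles of $A,A'$ swapped yields exactly \eqref{liestg} and \eqref{liedr}. The paper omits the proof entirely, calling it a straightforward computation, and your argument is precisely that intended direct verification, with the useful organizing observation that $B(B^{\star})^{-1}$ and $B^{\star}B^{-1}$ are mutually inverse.
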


Next result translates the above properties in the special case when the geometric structure $b$ is symmetric or skew--symmetric. Moreover, we also provide geometric interpretations of the solvability conditions \eqref{reltrei}, \eqref{relpatru}.

\begin{remark} 
\begin{enumerate}
\item[(i)] If $b$ is symmetric then $B^{\star}=B$ and consequently 
\begin{align*}
\operatorname{Sym}_{n}^{L}(b,B)&=\operatorname{Sym}_{n}^{R}(b,B)\\
&=\operatorname{Sym}_{n}(b)=\{A\in\operatorname{End}(\mathbb{R}_{b}^n): ~ b(A\mathbf{u},\mathbf{v})=b(\mathbf{u},A\mathbf{v}), ~ \forall\mathbf{u},\mathbf{v}\in\mathbb{R}_{b}^{n}\},
\end{align*}
where $\operatorname{Sym}_{n}(b)$ denotes the space of linear self--adjoint operators on $\mathbb{R}_{b}^n$ (or the space of linear operators symmetric with respect to $b$).

Moreover, as $[A,A']_{(b,B)}^{L}=[A,A']_{(b,B)}^{R}= [A,A'], ~\forall A,A' \in \operatorname{End}(\mathbb{R}_{b}^n)$,  
the relations \eqref{liestg}, \eqref{liedr} are identical, and hence for every $A,A' \in \operatorname{Sym}_{n}(b)$ we have that
$$
b([A,A']\mathbf{u},\mathbf{v})= - b(\mathbf{u},[A,A']\mathbf{v}), ~ \forall \mathbf{u},\mathbf{v}\in\mathbb{R}_{b}^{n}.
$$

Returning to the solvability condition \eqref{reltrei}, this becomes equivalent to $$DX(\mathbf{x})\in\operatorname{Sym}_{n}(b),~ \forall\mathbf{x}\in\mathbb{R}_{b}^{n},$$
which in the case when $b$ is positive definite (i.e. $b$ is an inner product), is precisely the classical characterization of $X$ as being a gradient vector field with respect to the inner product $b$.

\item[(ii)] If $n$ is even and $b$ is skew-symmetric (i.e. $b$ is a symplectic form) then $B^{\star}= - B$ and consequently 
\begin{align*}
\operatorname{Sym}_{n}^{L}(b,B)&=\operatorname{Sym}_{n}^{R}(b,B)\\
&=\operatorname{Skew}_{n}(b)=\{A\in\operatorname{End}(\mathbb{R}_{b}^n): ~ b(A\mathbf{u},\mathbf{v})= - b(\mathbf{u},A\mathbf{v}), ~ \forall\mathbf{u},\mathbf{v}\in\mathbb{R}_{b}^{n}\},
\end{align*}
where $\operatorname{Skew}_{n}(b)$ denotes the space of linear skew--adjoint operators on $\mathbb{R}_{b}^n$ (or the space of linear operators skew-symmetric with respect to $b$).

Moreover, as $[A,A']_{(b,B)}^{L}=[A,A']_{(b,B)}^{R}= - [A,A'], ~\forall A,A' \in \operatorname{End}(\mathbb{R}_{b}^n)$,  
the relations \eqref{liestg}, \eqref{liedr} are identical, and hence for every $A,A' \in \operatorname{Skew}_{n}(b)$ we have that
$$
b([A,A']\mathbf{u},\mathbf{v})=- b(\mathbf{u},[A,A']\mathbf{v}), ~ \forall \mathbf{u},\mathbf{v}\in\mathbb{R}_{b}^{n}.
$$
Consequently, $[A,A']\in\operatorname{Skew}_{n}(b), ~ \forall A,A' \in \operatorname{Skew}_{n}(b)$, and so $\operatorname{Skew}_{n}(b)$ becomes a Lie subalgebra of $\operatorname{End}(\mathbb{R}_{b}^n)$. More precisely, as $b$ is a symplectic form on $\mathbb{R}^n$, it follows that $\operatorname{Skew}_{n}(b)=\mathfrak{sp}(b)$, where $\mathfrak{sp}(b)$ denotes the Lie algebra of the symplectic group generated by $b$, i.e. $\operatorname{Sp}(b):=\{A\in\operatorname{Aut}(\mathbb{R}_{b}^n): ~ b(A\mathbf{u},A\mathbf{v})=b(\mathbf{u},\mathbf{v}), ~\forall \mathbf{u},\mathbf{v}\in\mathbb{R}_{b}^{n}\}$.

Returning to the solvability condition \eqref{relpatru}, this becomes equivalent to $$DX(\mathbf{x})\in\mathfrak{sp}(b),~ \forall\mathbf{x}\in\mathbb{R}_{b}^{n},$$
which is precisely the classical characterization of $X$ as being a Hamiltonian vector field with respect to the symplectic form $b$.
\end{enumerate}
\end{remark}

Let us present now some dynamical properties of the gradient--like vector fields $\nabla^{R}_{b}F$, $\nabla^{R}_{b}F$, where $F:\mathbb{R}^{n}_{b}\rightarrow \mathbb{R}$ is an arbitrary given function of class $\mathcal{C}^1$. In the following, $\mathcal{L}_X$ stands for the Lie derivative along the vector field $X$.

\begin{proposition}\label{dyna}
Let $b$ be a geometric structure on $\mathbb{R}^{n}$, and let $(b,B)$ be the associated geometric pair. Then for every $F\in\mathcal{C}^{1}(\mathbb{R}^{n}_{b},\mathbb{R})$ the following assertions hold true.
\begin{enumerate}
\item[(i)] 
\begin{align*}\left(\mathcal{L}_{\nabla^{R}_{b}F} F\right)(\mathbf{x})&=\left(\mathcal{L}_{\nabla^{L}_{b}F} F\right)(\mathbf{x})= b(\nabla^{L}_{b}F(\mathbf{x}),\nabla^{L}_{b}F(\mathbf{x}))\\
&=b(\nabla^{R}_{b}F(\mathbf{x}),\nabla^{R}_{b}F(\mathbf{x}))=(\nabla F(\mathbf{x}))^{\top}B\nabla F(\mathbf{x}), ~\forall \mathbf{x}\in\mathbb{R}^{n}_{b},
\end{align*}
\item[(ii)] If $b$ is skew--symmetric then $F$ is a first integral for the vector field $\nabla_{b}F$.
\item[(iii)] If $b$ is symmetric and positive (negative) definite then $\mathcal{L}_{\nabla_{b}F} F\geq 0$ ($\mathcal{L}_{\nabla_{b}F} F\leq  0$) with equality on the set of critical points of $F$.
\end{enumerate}
\end{proposition}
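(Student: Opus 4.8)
The plan is to reduce all three items to the defining identities \eqref{lg}, \eqref{rg} of the gradient--like operators, together with the relations $\nabla^{R}_{b}F=B\nabla F$ and $\nabla^{L}_{b}F=B^{\star}\nabla F$ and the identity $B^{\star}=B^{\top}$ established earlier. Throughout I use that for a function $F$ and a vector field $X$ one has $\mathcal{L}_{X}F=\mathrm{d}F\cdot X$, so each Lie derivative is obtained by feeding the gradient field back into its own defining relation.

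For item (i), I would first compute the two outer expressions by evaluating \eqref{lg} and \eqref{rg} on the corresponding gradients themselves. Setting $\mathbf{v}=\nabla^{L}_{b}F(\mathbf{x})$ in \eqref{lg} gives $\mathcal{L}_{\nabla^{L}_{b}F}F(\mathbf{x})=\mathrm{d}F(\mathbf{x})\cdot\nabla^{L}_{b}F(\mathbf{x})=b(\nabla^{L}_{b}F(\mathbf{x}),\nabla^{L}_{b}F(\mathbf{x}))$, and symmetrically, setting $\mathbf{v}=\nabla^{R}_{b}F(\mathbf{x})$ in \eqref{rg} gives $\mathcal{L}_{\nabla^{R}_{b}F}F(\mathbf{x})=b(\nabla^{R}_{b}F(\mathbf{x}),\nabla^{R}_{b}F(\mathbf{x}))$. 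This already produces the two Lie--derivative equalities, and it remains only to identify both quadratic forms with $(\nabla F(\mathbf{x}))^{\top}B\nabla F(\mathbf{x})$.

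To carry this out I would use the compatibility $b(\mathbf{x},\mathbf{y})=\langle\mathbf{x},B^{-1}\mathbf{y}\rangle$ (a direct consequence of the geometric pair relation) together with $\nabla^{R}_{b}F=B\nabla F$, $\nabla^{L}_{b}F=B^{\star}\nabla F$ and $B^{\star}=B^{\top}$. Substituting $\nabla^{R}_{b}F=B\nabla F$ yields $b(\nabla^{R}_{b}F,\nabla^{R}_{b}F)=\langle B\nabla F,\nabla F\rangle=(\nabla F)^{\top}B^{\top}\nabla F$, and the analogous substitution $\nabla^{L}_{b}F=B^{\top}\nabla F$ gives the same value $(\nabla F)^{\top}B^{\top}\nabla F$. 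The one point worth flagging is that the naive computation produces $B^{\top}$ rather than $B$; but since these are scalars, $(\nabla F)^{\top}B^{\top}\nabla F=(\nabla F)^{\top}B\nabla F$ upon transposing the $1\times 1$ matrix, which closes the chain of equalities. This transpose identification is the only non-mechanical step, and it is the sole place where any care is needed.

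Items (ii) and (iii) then follow immediately by reading off the quadratic form $b(\nabla_{b}F(\mathbf{x}),\nabla_{b}F(\mathbf{x}))$ delivered by (i). If $b$ is skew--symmetric, then $b(\mathbf{w},\mathbf{w})=0$ for every $\mathbf{w}\in\mathbb{R}^{n}_{b}$, so $\mathcal{L}_{\nabla_{b}F}F\equiv 0$, meaning $F$ is constant along the flow and hence a first integral of $\nabla_{b}F$. If $b$ is symmetric and positive (respectively negative) definite, then $b(\mathbf{w},\mathbf{w})\geq 0$ (respectively $\leq 0$) with equality precisely when $\mathbf{w}=0$; applying this to $\mathbf{w}=\nabla_{b}F(\mathbf{x})=B\nabla F(\mathbf{x})$ and using the invertibility of $B$, equality holds exactly when $\nabla F(\mathbf{x})=0$, i.e. at the critical points of $F$, which is the asserted conclusion.
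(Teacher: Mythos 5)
Your proof is correct and takes essentially the same approach as the paper: both arguments reduce everything to the representations $\nabla^{R}_{b}F=B\nabla F$, $\nabla^{L}_{b}F=B^{\star}\nabla F$, the identity $B^{\star}=B^{\top}$, and the sign/vanishing properties of the quadratic form $b(\mathbf{w},\mathbf{w})$, concluding items (ii) and (iii) in the identical way. The only cosmetic difference is that you obtain $\mathcal{L}_{\nabla^{L}_{b}F}F=b(\nabla^{L}_{b}F,\nabla^{L}_{b}F)$ and $\mathcal{L}_{\nabla^{R}_{b}F}F=b(\nabla^{R}_{b}F,\nabla^{R}_{b}F)$ by feeding each gradient--like field into its own defining relation \eqref{lg}, \eqref{rg}, whereas the paper derives the same identities by first passing to the Euclidean pairing $\langle B\nabla F,\nabla F\rangle$ and then converting back via the geometric pair.
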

\begin{proof}
\begin{enumerate}
\item[(i)] Using the definition of the Lie derivative and the fact that $\nabla^{R}_{b}F =B\nabla F$, we obtain successively
\begin{align}\label{lie1}
\begin{split}
\left(\mathcal{L}_{\nabla^{R}_{b}F} F\right)(\mathbf{x})&=\langle\nabla^{R}_{b}F(\mathbf{x}),\nabla F(\mathbf{x})\rangle =\langle B\nabla F(\mathbf{x}),\nabla F(\mathbf{x})\rangle\\
& = b(B\nabla F(\mathbf{x}),B\nabla F(\mathbf{x}))= b(\nabla^{R}_{b}F(\mathbf{x}),\nabla^{R}_{b}F(\mathbf{x})), ~\forall \mathbf{x}\in\mathbb{R}^{n}_{b}.
\end{split}
\end{align}
Similarly, using the relation $\nabla^{L}_{b}F =B^{\star}\nabla F$ it follows that
\begin{align}\label{lie2}
\begin{split}
\left(\mathcal{L}_{\nabla^{L}_{b}F} F\right)(\mathbf{x})&=\langle\nabla^{L}_{b}F(\mathbf{x}),\nabla F(\mathbf{x})\rangle =\langle B^{\star}\nabla F(\mathbf{x}),\nabla F(\mathbf{x})\rangle = \langle \nabla F(\mathbf{x}),B^{\star}\nabla F(\mathbf{x})\rangle\\
& = b(B^{\star}\nabla F(\mathbf{x}),B^{\star}\nabla F(\mathbf{x}))= b(\nabla^{L}_{b}F(\mathbf{x}),\nabla^{L}_{b}F(\mathbf{x})), ~\forall \mathbf{x}\in\mathbb{R}^{n}_{b}.
\end{split}
\end{align}
As $B^{\star}=B^{\top}$ we get that $$\langle B\nabla F(\mathbf{x}),\nabla F(\mathbf{x})\rangle=\langle B^{\star}\nabla F(\mathbf{x}),\nabla F(\mathbf{x})\rangle =(\nabla F(\mathbf{x}))^{\top}B\nabla F(\mathbf{x}), ~\forall \mathbf{x}\in\mathbb{R}^{n}_{b},$$ and hence the relations \eqref{lie1} and \eqref{lie2} are identical.
\item[(ii)] The proof follows from $(i)$ together with the equality $b(\mathbf{x},\mathbf{x})=0, ~ \forall \mathbf{x}\in\mathbb{R}^{n}_{b}$.
\item[(iii)] The proof follows from $(i)$ taking into account that the vector fields $\nabla_{b}F$ and $\nabla F$, have the same equilibrium points.
\end{enumerate}
\end{proof}

In the following, we express some of the classical vector fields naturally associated to three of the main canonical geometries of $\mathbb{R}^n$, as gradient--like vector fields associated to specific geometric structures on $\mathbb{R}^n$.

\begin{example}\label{expl}
\begin{enumerate}
\item[(i)] If $b(\mathbf{x},\mathbf{y})=\langle \mathbf{x},\mathbf{y} \rangle, ~\forall \mathbf{x},\mathbf{y}\in\mathbb{R}^{n}$, then $\mathbb{R}^{n}_{b}$ is the canonical Euclidean $n-$dimensional vector space. Thus, in this case we have that $\nabla_{b}F=\nabla F, ~ \forall F\in\mathcal{C}^{1}(\mathbb{R}^{n}_{b},\mathbb{R})$.
\item[(ii)] If $n=2m$ and $b(\mathbf{x},\mathbf{y})=\omega_{0}(\mathbf{x},\mathbf{y}):=\langle \mathbf{x},\mathbb{J}_{n} \mathbf{y} \rangle, ~\forall \mathbf{x},\mathbf{y}\in\mathbb{R}^{n}$, where $\mathbb{J}_{n}:= \left[ {\begin{array}{*{20}c}
   O_m & I_m \\
   - I_m & O_m \\
\end{array}} \right]$, then $\mathbb{R}^{n}_{b}$ is the canonical symplectic $n-$dimensional vector space. Thus, in this case we have that $\nabla_{b}F =X_{F}, ~\forall F\in\mathcal{C}^{1}(\mathbb{R}^{n}_{b},\mathbb{R})$, where $X_{F}=\mathbb{J}_n\nabla F$ is the Hamiltonian vector field of $F$ with respect to the canonical symplectic form $\omega_{0}$. 
\item[(iii)] If $b(\mathbf{x},\mathbf{y})=\langle \mathbf{x},\mathbb{E}_{n-1,1} \mathbf{y} \rangle, ~\forall \mathbf{x},\mathbf{y}\in\mathbb{R}^{n}$, where $\mathbb{E}_{n-1,1}:= \left[ {\begin{array}{*{20}c}
   I_{n-1} & O_{n-1,1} \\
   O_{1,n-1} & -1 \\
\end{array}} \right]$, then $\mathbb{R}^{n}_{b}$ is the canonical Minkowski $n-$dimensional vector space $\mathbb{R}^{n-1,1}$. Thus, in this case we have that $\nabla_{b}F =\nabla_{n-1,1}F, ~\forall F\in\mathcal{C}^{1}(\mathbb{R}^{n}_{b},\mathbb{R})$, where $\nabla_{n-1,1}F=\mathbb{E}_{n-1,1}\nabla F$ is 
the Minkowski gradient vector field of $F$ with respect to the canonical $n-$dimensional Minkowski inner product on $\mathbb{R}^n$.
\end{enumerate} 
\end{example}

\section{A global geometric decomposition of vector fields of class $\mathcal{C}^{1}$ on $\mathbb{R}^{n}$}

In this section we present the main result of this article, which provides a global geometric decomposition of a general continuously differentiable vector field on $\mathbb{R}^n$. More precisely, given a vector field of class $\mathcal{C}^{1}$ on $\mathbb{R}^{n}$, and a geometric structure on $\mathbb{R}^n$, we provide a unique global decomposition of the vector field as a sum of a left (right) gradient--like vector field (naturally associated to the geometric structure) with potential function vanishing at the origin, and a vector field which is left (right) orthogonal to the identity, with respect to the geometric structure. This decomposition is a geometric generalization of the main result of \cite{presnov}.

\begin{theorem}\label{MTHM}
Let $X\in\mathfrak{X}(\mathbb{R}^{n})$ be a vector field of class $\mathcal{C}^{1}$ on $\mathbb{R}^{n}$. Let $b$ be a geometric structure on $\mathbb{R}^{n}$. Then the following assertions hold.
\begin{enumerate}
\item[(i)] There exists a \textbf{unique global decomposition} of $X$ given by
\begin{equation}\label{split1}
X(\mathbf{x})=\nabla^{R}_{b}H(\mathbf{x}) + \mathbf{u}(\mathbf{x}), ~ \forall \mathbf{x}\in\mathbb{R}^{n}_{b},
\end{equation}
such that $H(\mathbf{0})=0$ and $b(\mathbf{x},\mathbf{u}(\mathbf{x}))=0, ~\forall \mathbf{x}\in\mathbb{R}^{n}_{b}$.
\item[(ii)] There exists a \textbf{unique global decomposition} of $X$ given by
\begin{equation}\label{split2}
X(\mathbf{x})=\nabla^{L}_{b}H^{\star}(\mathbf{x}) + \mathbf{u}^{\star}(\mathbf{x}), ~ \forall \mathbf{x}\in\mathbb{R}^{n}_{b},
\end{equation}
such that $H^{\star}(\mathbf{0})=0$ and $b(\mathbf{u}^{\star}(\mathbf{x}),\mathbf{x})=0, ~\forall \mathbf{x}\in\mathbb{R}^{n}_{b}$.
\item[(iii)] The relation between $H$ and $H^{\star}$ is given by
\begin{equation}\label{hhs}
H^{\star}(\mathbf{x})= H(\mathbf{x}) + 2 \int_{0}^{1}\mathcal{A}_{b}(X(t\mathbf{x}),\mathbf{x})\mathrm{d}t, ~ \forall \mathbf{x}\in\mathbb{R}^{n}_{b},
\end{equation}
where $\mathcal{A}_{b}(\mathbf{x},\mathbf{y}):=\dfrac{1}{2}\left( b(\mathbf{x},\mathbf{y})-b(\mathbf{y},\mathbf{x})\right), ~\forall \mathbf{x},\mathbf{y}\in\mathbb{R}^{n}_{b}$, is the skew--symmetric part of the bilinear form $b$. 
\item[(iv)] If $b$ is symmetric, then $H^{\star}=H$ and $\mathbf{u}^{\star}=\mathbf{u}$, while if $b$ is skew--symmetric, then $H^{\star}= - H$ and $\mathbf{u}^{\star}=\mathbf{u}$.
\item[(v)] If $b$ is symmetric or skew--symmetric, there exists a \textbf{unique global decomposition} of $X$ given by
\begin{equation}\label{decvf}
X(\mathbf{x})=\nabla_{b}H(\mathbf{x}) + \mathbf{u}(\mathbf{x}), ~ \forall \mathbf{x}\in\mathbb{R}^{n}_{b},
\end{equation}
such that $H(\mathbf{0})=0$ and $b(\mathbf{u}(\mathbf{x}),\mathbf{x})=b(\mathbf{x},\mathbf{u}(\mathbf{x}))=0, ~\forall \mathbf{x}\in\mathbb{R}^{n}_{b}$.

When $b$ is skew--symmetric (i.e. symplectic form), $\nabla_{b}H$ coincides with $X_{H}$, the Hamiltonian vector field of $H$ with respect to the symplectic form $b$.
\end{enumerate}
\end{theorem}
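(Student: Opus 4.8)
The plan is to prove existence and uniqueness of each decomposition by reducing everything to the canonical-gradient case handled by Presnov, exploiting the identities $\nabla^{R}_{b}F = B\nabla F$ and $\nabla^{L}_{b}F = B^{\star}\nabla F$ together with the compatibility relation $\langle \mathbf{x},\mathbf{y}\rangle = b(\mathbf{x},B\mathbf{y})$. The key observation is that the orthogonality conditions translate cleanly under $B$: the condition $b(\mathbf{x},\mathbf{u}(\mathbf{x}))=0$ is equivalent to $\langle B^{-1}\mathbf{x},\mathbf{u}(\mathbf{x})\rangle=0$, i.e. $\langle \mathbf{x},(B^{-1})^{\top}\mathbf{u}(\mathbf{x})\rangle=0$, and since $B^{\top}=B^{\star}$ this is a Euclidean orthogonality between $\mathbf{x}$ and a linearly transformed residual. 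So the strategy is to solve the problem for $\nabla$ and $\langle\cdot,\cdot\rangle$ and then push everything through the linear isomorphism $B$.

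For item $(i)$, I would first write down the candidate potential $H(\mathbf{x})=\int_{0}^{1}b(\mathbf{x},X(t\mathbf{x}))\,\mathrm{d}t$, motivated by the right-gradient solution formula in Theorem \ref{Poincare}$(ii)$, and set $\mathbf{u}(\mathbf{x}):=X(\mathbf{x})-\nabla^{R}_{b}H(\mathbf{x})$. The two things to verify are $H(\mathbf{0})=0$ (immediate, since the integrand vanishes at $\mathbf{x}=\mathbf{0}$) and the orthogonality $b(\mathbf{x},\mathbf{u}(\mathbf{x}))=0$. For the latter, the natural approach is the Euler-type computation: using $\nabla^{R}_{b}H=B\nabla H$ and differentiating $H(t\mathbf{x})$ in $t$, one checks by an integration-by-parts / total-derivative argument (exactly as in the Poincaré-lemma proof, recognizing an integrand of the form $\tfrac{\mathrm{d}}{\mathrm{d}t}[tX(t\mathbf{x})]$) that $b(\mathbf{x},\nabla^{R}_{b}H(\mathbf{x}))=b(\mathbf{x},X(\mathbf{x}))$, whence $b(\mathbf{x},\mathbf{u}(\mathbf{x}))=0$. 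Item $(ii)$ follows by the symmetric argument with $H^{\star}(\mathbf{x})=\int_{0}^{1}b(X(t\mathbf{x}),\mathbf{x})\,\mathrm{d}t$, $\nabla^{L}_{b}$, and the condition $b(\mathbf{u}^{\star}(\mathbf{x}),\mathbf{x})=0$.

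For uniqueness, suppose $X=\nabla^{R}_{b}H_{1}+\mathbf{u}_{1}=\nabla^{R}_{b}H_{2}+\mathbf{u}_{2}$ with both pieces satisfying the constraints. Then $\nabla^{R}_{b}(H_{1}-H_{2})=\mathbf{u}_{2}-\mathbf{u}_{1}$, and pairing against $\mathbf{x}$ with $b(\mathbf{x},\cdot)$ kills the right side, giving $b(\mathbf{x},\nabla^{R}_{b}(H_{1}-H_{2})(\mathbf{x}))=0$; but $b(\mathbf{x},\nabla^{R}_{b}G(\mathbf{x}))=\mathrm{d}G(\mathbf{x})\cdot\mathbf{x}=\tfrac{\mathrm{d}}{\mathrm{d}t}\big|_{t=1}G(t\mathbf{x})$ by \eqref{rg}, so $G:=H_{1}-H_{2}$ satisfies an Euler-homogeneity ODE along every ray forcing $G(\mathbf{x})=G(\mathbf{0})$, and $G(\mathbf{0})=0$ by the normalization, hence $H_{1}=H_{2}$ and then $\mathbf{u}_{1}=\mathbf{u}_{2}$. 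Item $(iii)$ is a direct subtraction of the two integral formulas, recognizing $\tfrac12(b(\mathbf{x},\mathbf{y})-b(\mathbf{y},\mathbf{x}))=\mathcal{A}_{b}(\mathbf{x},\mathbf{y})$; item $(iv)$ is then immediate from \eqref{hhs} since $\mathcal{A}_{b}\equiv 0$ when $b$ is symmetric and $\mathcal{A}_{b}=b$ when $b$ is skew--symmetric, combined with $\nabla^{L}_{b}=\pm\nabla^{R}_{b}$; and item $(v)$ follows by specializing $(i)$/$(ii)$ and noting that the two orthogonality conditions collapse to one because $\mathbf{u}^{\star}=\mathbf{u}$. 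The main obstacle I anticipate is the orthogonality verification in $(i)$: the cancellation that produces $b(\mathbf{x},\mathbf{u}(\mathbf{x}))=0$ relies on the same delicate interplay between $\mathrm{D}X$, $B^{-1}$, and the transpose that appears in Theorem \ref{Poincare}, and one must be careful to use $B^{\star}=B^{\top}$ at the right moment rather than assuming any normality of $B$.
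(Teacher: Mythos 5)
Your proposal is correct and takes essentially the same route as the paper: the same candidate potential $H(\mathbf{x})=\int_{0}^{1}b(\mathbf{x},X(t\mathbf{x}))\,\mathrm{d}t$ (the paper writes it as $\int_{0}^{1}\frac{1}{t}\sigma(t\mathbf{x})\,\mathrm{d}t$ with $\sigma(\mathbf{x})=b(\mathbf{x},X(\mathbf{x}))$), the same residual $\mathbf{u}=X-\nabla^{R}_{b}H$ verified by the Euler-type radial computation, and the same uniqueness argument via constancy of global first integrals of the radial field $\mathbb{X}(\mathbf{x})=\mathbf{x}$, with (iii)--(v) handled identically. One clarification: the obstacle you anticipate at the end does not arise --- because you only ever pair against $\mathbf{x}$ itself (never an arbitrary $\mathbf{v}$), the differentiated integrand combines into $b\bigl(\mathbf{x},\frac{\mathrm{d}}{\mathrm{d}t}[tX(t\mathbf{x})]\bigr)$ by bilinearity alone, so no transpose manipulation with $\mathrm{D}X$ and no integrability condition from Theorem \ref{Poincare} is needed, which is precisely why the decomposition exists for \emph{every} $\mathcal{C}^{1}$ vector field.
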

\begin{proof} The proof is based on the construction of solutions provided by the Poincar\'e lemma associated to the geometric structure $b$, i.e. the Theorem \ref{Poincare}.
\begin{enumerate}
\item[(i)] Let us consider the function $\sigma : \mathbb{R}^{n}_{b}\rightarrow \mathbb{R}$ defined by $\sigma(\mathbf{x})=b(\mathbf{x},X(\mathbf{x})), ~ \forall \mathbf{x}\in\mathbb{R}^{n}_{b}$. As $\sigma(t\mathbf{x})=t b(\mathbf{x},X(t\mathbf{x})), ~ \forall \mathbf{x}\in\mathbb{R}^{n}_{b}, \forall t\in\mathbb{R}$, it is well defined the function $H:\mathbb{R}^{n}_{b}\rightarrow \mathbb{R}$ given by
\begin{equation}\label{h1}
H(\mathbf{x})=\int_{0}^{1}\dfrac{1}{t}\sigma(t\mathbf{x})\mathrm{d}t, ~~\forall \mathbf{x}\in\mathbb{R}^{n}_{b}.
\end{equation}
Since $\sigma(\mathbf{0})=0$, we get that $H(\mathbf{0})=0$.

Using the definition of $H$, we obtain successively the following equalities valid for all $\mathbf{x}=(x_1,\dots, x_n)\in\mathbb{R}^{n}_{b}$:
\begin{align*}
b(\mathbf{x},\nabla^{R}_{b}H(\mathbf{x}))&=\mathrm{d}H(\mathbf{x})\cdot\mathbf{x}=\langle\mathbf{x},\nabla H(\mathbf{x})\rangle=\sum_{i=1}^{n}x_i \cdot \dfrac{\partial}{\partial x_i}\int_{0}^{1}\dfrac{1}{t}\sigma(t\mathbf{x})\mathrm{d}t\\
&=\sum_{i=1}^{n}x_i \cdot \int_{0}^{1}\dfrac{\partial\sigma}{\partial x_i}(t\mathbf{x})\mathrm{d}t =\int_{0}^{1}\dfrac{\mathrm{d}}{\mathrm{d}t}\sigma(t\mathbf{x})\mathrm{d}t =\sigma(\mathbf{x})=b(\mathbf{x},X(\mathbf{x})).
\end{align*}
Consequently, from the above relations and the bilinearity of $b$ it follows that $$b(\mathbf{x},X(\mathbf{x})-\nabla^{R}_{b} H(\mathbf{x}))=0, ~ \forall \mathbf{x}\in\mathbb{R}^{n}_{b}.$$
Denoting $\mathbf{u}(\mathbf{x}):=X(\mathbf{x})- \nabla^{R}_{b} H(\mathbf{x}), ~~ \forall \mathbf{x}\in\mathbb{R}^{n}_{b}$, we get that $b(\mathbf{x},\mathbf{u}(\mathbf{x}))=0, ~\forall \mathbf{x}\in\mathbb{R}^{n}_{b}$, and hence the splitting \eqref{split1}.

In order to prove the uniqueness of this splitting, suppose that given $X\in\mathfrak{X}(\mathbb{R}^{n}_{b})$, there exist two pairs $(H_1, \mathbf{u}_1)$ and $(H_2, \mathbf{u}_2)$ with $H_1 (\mathbf{0})=H_2 (\mathbf{0})=0$ and $b(\mathbf{x},\mathbf{u}_1(\mathbf{x}))=b(\mathbf{x},\mathbf{u}_2(\mathbf{x}))=0, ~\forall \mathbf{x}\in\mathbb{R}^{n}_{b}$, such that
\begin{equation}\label{e1}
X(\mathbf{x})=\nabla^{R}_{b} H_1 (\mathbf{x}) + \mathbf{u}_1 (\mathbf{x}), ~ \forall \mathbf{x}\in\mathbb{R}^{n}_{b},
\end{equation}
and 
\begin{equation}\label{e2}
X(\mathbf{x})=\nabla^{R}_{b} H_2 (\mathbf{x}) + \mathbf{u}_2 (\mathbf{x}), ~ \forall \mathbf{x}\in\mathbb{R}^{n}_{b}.
\end{equation}
Using the relations $b(\mathbf{x},\mathbf{u}_1(\mathbf{x}))=b(\mathbf{x},\mathbf{u}_2(\mathbf{x}))=0, ~\forall \mathbf{x}\in\mathbb{R}^{n}_{b}$ we deduce that
\begin{equation}\label{e3}
b(\mathbf{x},\mathbf{u}_1(\mathbf{x})-\mathbf{u}_2(\mathbf{x}))=0, ~ \forall \mathbf{x}\in\mathbb{R}^{n}_{b}.
\end{equation}
Now, from the equalities \eqref{e1}, \eqref{e2} one gets that 
\begin{equation*}
\mathbf{u}_1(\mathbf{x})-\mathbf{u}_2(\mathbf{x})= \nabla^{R}_{b} (H_2 -H_1)(\mathbf{x}),~ \forall \mathbf{x}\in\mathbb{R}^{n}_{b},
\end{equation*}
and consequently, the relation \eqref{e3} reads as follows
\begin{equation*}
b(\mathbf{x},\nabla^{R}_{b} (H_2 -H_1)(\mathbf{x}))=0, ~ \forall \mathbf{x}\in\mathbb{R}^{n}_{b}.
\end{equation*}
As $b(\mathbf{v},\nabla^{R}_{b} (H_2 -H_1)(\mathbf{x}))=\mathrm{d}(H_2 -H_1)(\mathbf{x})\cdot\mathbf{v}, ~ \forall \mathbf{x},\mathbf{v}\in\mathbb{R}^{n}_{b}$, the above relation becomes 
\begin{equation*}
\langle\mathbf{x},\nabla (H_2 -H_1)(\mathbf{x})\rangle=0, ~ \forall \mathbf{x}\in\mathbb{R}^{n}_{b},
\end{equation*}
which means that $H_2 - H_1$ is a \textit{global} first integral of the vector field $\mathbb{X}(\mathbf{x}):=\mathbf{x}, ~\forall\mathbf{x}\in\mathbb{R}^{n}_{b}$. Thus, the function $(H_2 - H_1) :\mathbb{R}^{n}_{b}\rightarrow \mathbb{R}$ must be constant. Since by hypothesis, $H_1 (\mathbf{0})=H_2 (\mathbf{0})=0$, we get $H_1 =H_2$. Using the relations \eqref{e1}, \eqref{e2}, it follows that $\mathbf{u}_1=\mathbf{u}_2$, and hence we obtain the uniqueness of the splitting \eqref{split1}.

\item[(ii)] The proof follows step by step the one of item $(i)$. More precisely, in this case we start with the function $\sigma^{\star} : \mathbb{R}^{n}_{b}\rightarrow \mathbb{R}$ defined by $\sigma^{\star}(\mathbf{x})=b(X(\mathbf{x}),\mathbf{x}), ~ \forall \mathbf{x}\in\mathbb{R}^{n}_{b}$. Associated to $\sigma^{\star}$ we define the function $H^{\star}:\mathbb{R}^{n}_{b}\rightarrow \mathbb{R}$ given by
\begin{equation}\label{h2}
H^{\star}(\mathbf{x})=\int_{0}^{1}\dfrac{1}{t}\sigma^{\star}(t\mathbf{x})\mathrm{d}t, ~~\forall \mathbf{x}\in\mathbb{R}^{n}_{b}.
\end{equation}
As in the previous case, we have that $H^{\star}(\mathbf{0})=0$.

Using the definition of $H^{\star}$, we obtain successively the following equalities valid for all $\mathbf{x}=(x_1,\dots, x_n)\in\mathbb{R}^{n}_{b}$:
\begin{align*}
b(\nabla^{L}_{b} H^{\star}(\mathbf{x}),\mathbf{x})&=\mathrm{d}H^{\star}(\mathbf{x})\cdot\mathbf{x}=\langle\mathbf{x},\nabla H^{\star}(\mathbf{x})\rangle =\sum_{i=1}^{n}x_i \cdot \dfrac{\partial}{\partial x_i}\int_{0}^{1}\dfrac{1}{t}\sigma^{\star}(t\mathbf{x})\mathrm{d}t \\
&=\sum_{i=1}^{n}x_i \cdot \int_{0}^{1}\dfrac{\partial\sigma^{\star}}{\partial x_i}(t\mathbf{x})\mathrm{d}t =\int_{0}^{1}\dfrac{\mathrm{d}}{\mathrm{d}t}\sigma^{\star}(t\mathbf{x})\mathrm{d}t =\sigma^{\star}(\mathbf{x})=b(X(\mathbf{x}),\mathbf{x}).
\end{align*}
Consequently, from the above relations and the bilinearity of $b$ it follows that $$b(X(\mathbf{x})-\nabla^{L}_{b} H^{\star}(\mathbf{x}),\mathbf{x})=0, ~ \forall \mathbf{x}\in\mathbb{R}^{n}_{b}.$$
Denoting $\mathbf{u}^{\star}(\mathbf{x}):=X(\mathbf{x})- \nabla^{L}_{b} H^{\star}(\mathbf{x}), ~~ \forall \mathbf{x}\in\mathbb{R}^{n}_{b}$, we get that $b(\mathbf{u}^{\star}(\mathbf{x}),\mathbf{x})=0, ~\forall \mathbf{x}\in\mathbb{R}^{n}_{b}$, and hence the splitting \eqref{split2}.

In order to prove the uniqueness of this splitting, suppose that given $X\in\mathfrak{X}(\mathbb{R}^{n}_{b})$, there exist two pairs $(H_1^{\star}, \mathbf{u}_1^{\star})$ and $(H_2^{\star}, \mathbf{u}_2^{\star})$ with $H_1^{\star} (\mathbf{0})=H_2^{\star} (\mathbf{0})=0$ and $b(\mathbf{u}_1^{\star}(\mathbf{x}),\mathbf{x})=b(\mathbf{u}_2^{\star}(\mathbf{x}),\mathbf{x})=0, ~\forall \mathbf{x}\in\mathbb{R}^{n}_{b}$, such that
\begin{equation}\label{e1s}
X(\mathbf{x})=\nabla^{L}_{b} H_1^{\star} (\mathbf{x}) + \mathbf{u}_1^{\star} (\mathbf{x}), ~ \forall \mathbf{x}\in\mathbb{R}^{n}_{b},
\end{equation}
and 
\begin{equation}\label{e2s}
X(\mathbf{x})=\nabla^{L}_{b} H_2^{\star} (\mathbf{x}) + \mathbf{u}_2^{\star} (\mathbf{x}), ~ \forall \mathbf{x}\in\mathbb{R}^{n}_{b}.
\end{equation}
Using the relations $b(\mathbf{u}_1^{\star}(\mathbf{x}),\mathbf{x})=b(\mathbf{u}_2^{\star}(\mathbf{x}),\mathbf{x})=0, ~\forall \mathbf{x}\in\mathbb{R}^{n}_{b}$ we deduce that
\begin{equation}\label{e3s}
b(\mathbf{u}_1^{\star}(\mathbf{x})-\mathbf{u}_2^{\star}(\mathbf{x}),\mathbf{x})=0, ~ \forall \mathbf{x}\in\mathbb{R}^{n}_{b}.
\end{equation}
Now, from the equalities \eqref{e1s}, \eqref{e2s} one gets that 
\begin{equation*}
\mathbf{u}_1^{\star}(\mathbf{x})-\mathbf{u}_2^{\star}(\mathbf{x})= \nabla^{L}_{b} (H_2^{\star} -H_1^{\star})(\mathbf{x}),~ \forall \mathbf{x}\in\mathbb{R}^{n}_{b},
\end{equation*}
and consequently, the relation \eqref{e3s} reads as follows
\begin{equation*}
b(\nabla^{L}_{b} (H_2^{\star} -H_1^{\star})(\mathbf{x}),\mathbf{x})=0, ~ \forall \mathbf{x}\in\mathbb{R}^{n}_{b}.
\end{equation*}
As $b(\nabla^{L}_{b} (H_2^{\star} -H_1^{\star})(\mathbf{x}),\mathbf{v})=\mathrm{d}(H_2^{\star} -H_1^{\star})(\mathbf{x})\cdot\mathbf{v}, ~ \forall \mathbf{x},\mathbf{v}\in\mathbb{R}^{n}_{b}$, the above relation becomes
\begin{equation*}
\langle\nabla (H_2^{\star} -H_1^{\star})(\mathbf{x}),\mathbf{x}\rangle=0, ~ \forall \mathbf{x}\in\mathbb{R}^{n}_{b},
\end{equation*}
which means that $H_2^{\star} - H_1^{\star}$ is a \textit{global} first integral of the vector field $\mathbb{X}(\mathbf{x})=\mathbf{x}, ~\forall\mathbf{x}\in\mathbb{R}_{b}^{n}$. Thus, the function $(H_2^{\star} - H_1^{\star}) :\mathbb{R}^{n}_{b}\rightarrow \mathbb{R}$ must be constant. Since by hypothesis, $H_1^{\star} (\mathbf{0})=H_2^{\star} (\mathbf{0})=0$, we get $H_1^{\star} =H_2^{\star}$. Using the relations \eqref{e1s}, \eqref{e2s}, it follows that $\mathbf{u}_1^{\star}=\mathbf{u}_2^{\star}$, and hence we obtain the uniqueness of the splitting \eqref{split2}.

\item[(iii)] Using the definitions of $H$ and $H^{\star}$ given by the relations \eqref{h1} and \eqref{h2}, we obtain successively 
\begin{align*}
H^{\star}(\mathbf{x})-H(\mathbf{x})&=\int_{0}^{1}\dfrac{1}{t}\left( b(X(t\mathbf{x}),t\mathbf{x})-b(t\mathbf{x},X(t\mathbf{x}))\right)\mathrm{d}t\\
&=2\int_{0}^{1}\dfrac{1}{t}\mathcal{A}_{b}(X(t\mathbf{x}),t\mathbf{x})\mathrm{d}t = 2\int_{0}^{1}\mathcal{A}_{b}(X(t\mathbf{x}),\mathbf{x})\mathrm{d}t, ~\forall \mathbf{x}\in\mathbb{R}^{n}_{b}.
\end{align*}

\item[(iv)] We split the proof in two cases, regarding the symmetry properties of $b$.
\begin{enumerate}
\item[(1)] If $b$ is symmetric, then $\mathcal{A}_{b}=0$ and hence from \eqref{hhs} it follows that $H^{\star}=H$. As $\nabla_{b}^{R} = \nabla_{b}^{L}$ and $H^{\star}=H$, we obtain that $\nabla_{b}^{R}H = \nabla_{b}^{L}H^{\star}$ and consequently from $(i)$ and $(ii)$ we get that $\mathbf{u}^{\star}=\mathbf{u}$.

\item[(2)] If $b$ is skew--symmetric, then $\mathcal{A}_{b}=b$ and hence from \eqref{hhs} it follows that $H^{\star}= - H$. As $\nabla_{b}^{R} = - \nabla_{b}^{L}$ and $H^{\star}= -H$, we obtain that $\nabla_{b}^{R}H = \nabla_{b}^{L}H^{\star}$ and consequently from $(i)$ and $(ii)$ we get that $\mathbf{u}^{\star}=\mathbf{u}$.
\end{enumerate}

\item[(v)] The proof follows directly from $(i)$ or $(ii)$.
\end{enumerate}
\end{proof}

Next result presents a conservative property of two vector fields naturally associated to the vector fields $\mathbf{u},\mathbf{u}^{\star}\in\mathfrak{X}(\mathbb{R}^{n}_{b})$ introduced in Theorem \ref{MTHM}. Equivalently, we give a dynamical interpretation of the relations $b(\mathbf{x},\mathbf{u}(\mathbf{x}))=0,~ b(\mathbf{u}^{\star}(\mathbf{x}),\mathbf{x})=0, ~\forall \mathbf{x}\in\mathbb{R}^{n}_{b}$.

\begin{proposition}\label{pr11}
Let $b$ be a geometric structure on $\mathbb{R}^n$, and let $(b,B)$ be the associated geometric pair. Then, the function $N:\mathbb{R}^{n}_{b}\rightarrow \mathbb{R}$, $N(\mathbf{x})=\|\mathbf{x}\|^2, ~\forall \mathbf{x}\in\mathbb{R}^{n}_{b}$, is a first integral of both vector fields, $B^{-1}\mathbf{u}, (B^{\star})^{-1}\mathbf{u}^{\star}\in\mathfrak{X}(\mathbb{R}^{n}_{b})$.
\end{proposition}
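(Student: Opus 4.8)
The plan is to verify directly that the Lie derivative of $N$ along each of the two fields vanishes identically. Recall that $N$ is a first integral of a vector field $Y$ precisely when $\mathcal{L}_Y N \equiv 0$; since $N(\mathbf{x})=\langle\mathbf{x},\mathbf{x}\rangle$ has Euclidean gradient $\nabla N(\mathbf{x})=2\mathbf{x}$, this condition reads $\langle \mathbf{x}, Y(\mathbf{x})\rangle = 0$ for every $\mathbf{x}\in\mathbb{R}^{n}_{b}$. Thus the whole statement reduces to checking two scalar identities, one for $Y=B^{-1}\mathbf{u}$ and one for $Y=(B^{\star})^{-1}\mathbf{u}^{\star}$.

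The key tool is the defining identity of the geometric pair, $\langle\mathbf{x},\mathbf{y}\rangle=b(\mathbf{x},B\mathbf{y})$, which I will use in the equivalent form $b(\mathbf{a},\mathbf{c})=\langle\mathbf{a},B^{-1}\mathbf{c}\rangle$ obtained by the substitution $\mathbf{y}=B^{-1}\mathbf{c}$. For the first field $Y=B^{-1}\mathbf{u}$, I compute $\langle\mathbf{x},B^{-1}\mathbf{u}(\mathbf{x})\rangle = b(\mathbf{x},\mathbf{u}(\mathbf{x}))$, which is zero by the defining orthogonality property of $\mathbf{u}$ established in Theorem \ref{MTHM}$(i)$. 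Hence $\mathcal{L}_{B^{-1}\mathbf{u}}N\equiv 0$ and $N$ is a first integral of $B^{-1}\mathbf{u}$.

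For the second field $Y=(B^{\star})^{-1}\mathbf{u}^{\star}$, I additionally invoke the identity $B^{\star}=B^{\top}$ from the earlier proposition, so that $(B^{\star})^{-1}=(B^{-1})^{\top}$ and the operator can be moved onto the other factor of the canonical inner product: $\langle\mathbf{x},(B^{\star})^{-1}\mathbf{u}^{\star}(\mathbf{x})\rangle=\langle B^{-1}\mathbf{x},\mathbf{u}^{\star}(\mathbf{x})\rangle=b(\mathbf{u}^{\star}(\mathbf{x}),\mathbf{x})$, where the last step again uses the pair identity in the form $b(\mathbf{a},\mathbf{c})=\langle\mathbf{a},B^{-1}\mathbf{c}\rangle$ with $\mathbf{a}=\mathbf{u}^{\star}(\mathbf{x})$ and $\mathbf{c}=\mathbf{x}$. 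By the defining orthogonality property of $\mathbf{u}^{\star}$ from Theorem \ref{MTHM}$(ii)$, this quantity vanishes, giving $\mathcal{L}_{(B^{\star})^{-1}\mathbf{u}^{\star}}N\equiv 0$.

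I do not expect any genuine obstacle: the argument is a short direct computation. The only point demanding care is the transpose/adjoint bookkeeping in the second field, namely the passage $(B^{\star})^{-1}=(B^{-1})^{\top}$ and the correct placement of $B^{-1}$ when converting between $\langle\cdot,\cdot\rangle$ and $b$. Conceptually, the role of $B$ in the pair identity is exactly what reconciles the fact that $N$ is built from the canonical inner product while the orthogonality conditions satisfied by $\mathbf{u}$ and $\mathbf{u}^{\star}$ are phrased in terms of $b$; the two prefactors $B^{-1}$ and $(B^{\star})^{-1}$ in the statement are chosen precisely so that this reconciliation produces, respectively, $b(\mathbf{x},\mathbf{u}(\mathbf{x}))$ and $b(\mathbf{u}^{\star}(\mathbf{x}),\mathbf{x})$.
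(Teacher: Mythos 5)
Your proposal is correct and follows essentially the same route as the paper: compute $\mathcal{L}_Y N$ using $\nabla N(\mathbf{x})=2\mathbf{x}$, then use the geometric-pair identity to convert the Euclidean pairing into $b(\mathbf{x},\mathbf{u}(\mathbf{x}))$ and $b(\mathbf{u}^{\star}(\mathbf{x}),\mathbf{x})$, which vanish by Theorem \ref{MTHM}. The only cosmetic difference is in the second field, where you move $(B^{\star})^{-1}=(B^{-1})^{\top}$ across the canonical inner product, while the paper instead writes $(B^{\star})^{-1}=(B^{-1})^{\star}$ and uses $B^{\star}(B^{-1})^{\star}=(B^{-1}B)^{\star}=\operatorname{Id}$; both are equivalent bookkeeping.
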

\begin{proof}
In order to prove that $N$ is a first integral of the vector field $B^{-1}\mathbf{u}$, we shall show that $\mathcal{L}_{B^{-1}\mathbf{u}}N=0$, where the notation $\mathcal{L}_{B^{-1}\mathbf{u}}$ stands for the Lie derivative along the vector field $B^{-1}\mathbf{u}$. As we will see in the following, this is equivalent to the relation $b(\mathbf{x},\mathbf{u}(\mathbf{x}))=0, ~\forall \mathbf{x}\in\mathbb{R}^{n}_{b}$. Indeed, for all $\mathbf{x}\in\mathbb{R}^{n}_{b}$ we have that
\begin{align*}
\left(\mathcal{L}_{B^{-1}\mathbf{u}}N\right)(\mathbf{x})&=\langle B^{-1}\mathbf{u}(\mathbf{x}),\nabla N(\mathbf{x})\rangle = \langle B^{-1}\mathbf{u}(\mathbf{x}),2 \mathbf{x}\rangle =\langle 2 \mathbf{x}, B^{-1}\mathbf{u}(\mathbf{x})\rangle \\
&=b(2 \mathbf{x},BB^{-1}\mathbf{u}(\mathbf{x}))=2 b(\mathbf{x},\mathbf{u}(\mathbf{x}))=0.
\end{align*}

The fact that $N$ is a first integral of the vector field $(B^{\star})^{-1}\mathbf{u}$ follows similarly, this time using the relation $b(\mathbf{u}^{\star}(\mathbf{x}),\mathbf{x})=0, ~\forall \mathbf{x}\in\mathbb{R}^{n}_{b}$. Indeed, for all $\mathbf{x}\in\mathbb{R}^{n}_{b}$ we have that
\begin{align*}
\left(\mathcal{L}_{(B^{\star})^{-1}\mathbf{u}^{\star}}N\right)(\mathbf{x})&=\left(\mathcal{L}_{(B^{-1})^{\star}\mathbf{u}^{\star}}N\right)(\mathbf{x})=\langle (B^{-1})^{\star}\mathbf{u}^{\star}(\mathbf{x}),\nabla N(\mathbf{x})\rangle = \langle (B^{-1})^{\star}\mathbf{u}^{\star}(\mathbf{x}),2 \mathbf{x}\rangle \\
&=b(B^{\star}(B^{-1})^{\star}\mathbf{u}^{\star}(\mathbf{x}),2 \mathbf{x})=b((B^{-1}B)^{\star}\mathbf{u}^{\star}(\mathbf{x}),2 \mathbf{x})=b((\operatorname{Id})^{\star}\mathbf{u}^{\star}(\mathbf{x}),2 \mathbf{x})\\
&=b(\operatorname{Id}\mathbf{u}^{\star}(\mathbf{x}),2 \mathbf{x})=2 b(\mathbf{u}^{\star}(\mathbf{x}),\mathbf{x})=0.
\end{align*}
\end{proof}

In the following we give a result which presents the dynamical properties of the vector fields $\mathbf{u},\mathbf{u}^{\star}\in\mathfrak{X}(\mathbb{R}^{n}_{b})$ relative to the geometry induced by the quadratic form associated to the geometric structure $b$.

\begin{proposition}\label{pr12}
The function $F_b:\mathbb{R}^{n}_{b}\rightarrow \mathbb{R}$, $F_{b}(\mathbf{x})=b(\mathbf{x},\mathbf{x}), ~\forall \mathbf{x}\in\mathbb{R}^{n}_{b}$, has the following properties
\begin{enumerate}
\item[(i)] $\left(\mathcal{L}_{\mathbf{u}}F_{b}\right)(\mathbf{x})=b(\mathbf{u}(\mathbf{x}),\mathbf{x}), ~ \forall \mathbf{x}\in\mathbb{R}^{n}_{b}$.
\item[(ii)] $\left(\mathcal{L}_{\mathbf{u}^{\star}}F_{b}\right)(\mathbf{x})=b(\mathbf{x},\mathbf{u}^{\star}(\mathbf{x})), ~ \forall \mathbf{x}\in\mathbb{R}^{n}_{b}$.
\item[(iii)] If $b$ is symmetric then $F_{b}$ is a first integral of the vector field $\mathbf{u}\in\mathfrak{X}(\mathbb{R}^{n}_{b})$.
\end{enumerate}
\end{proposition}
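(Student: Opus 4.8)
The plan is to identify the Lie derivative of $F_b$ along a vector field $Y$ with the directional derivative $\mathrm{d}F_b(\mathbf{x})\cdot Y(\mathbf{x})$, so that the whole proof reduces to computing the differential of the quadratic form $F_b(\mathbf{x})=b(\mathbf{x},\mathbf{x})$ and then substituting $Y=\mathbf{u}$ for item $(i)$, $Y=\mathbf{u}^{\star}$ for item $(ii)$, and specializing to symmetric $b$ for item $(iii)$.

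First I would compute $\mathrm{d}F_b$. Since $b$ is bilinear, expanding $F_b(\mathbf{x}+\mathbf{h})=b(\mathbf{x}+\mathbf{h},\mathbf{x}+\mathbf{h})$ and collecting the terms linear in $\mathbf{h}$ yields
\[
\mathrm{d}F_b(\mathbf{x})\cdot\mathbf{h}=b(\mathbf{x},\mathbf{h})+b(\mathbf{h},\mathbf{x}),\quad\forall\,\mathbf{x},\mathbf{h}\in\mathbb{R}^{n}_{b}.
\]
The one point that requires care is that $b$ need not be symmetric, so both orderings $b(\mathbf{x},\mathbf{h})$ and $b(\mathbf{h},\mathbf{x})$ genuinely appear and must be retained; it is precisely this asymmetry that makes the statements $(i)$ and $(ii)$ distinct, rather than a single identity.

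For item $(i)$, I would set $\mathbf{h}=\mathbf{u}(\mathbf{x})$, obtaining $\left(\mathcal{L}_{\mathbf{u}}F_{b}\right)(\mathbf{x})=b(\mathbf{x},\mathbf{u}(\mathbf{x}))+b(\mathbf{u}(\mathbf{x}),\mathbf{x})$, and then invoke the orthogonality relation $b(\mathbf{x},\mathbf{u}(\mathbf{x}))=0$ from Theorem \ref{MTHM}$(i)$ to annihilate the first summand, leaving $b(\mathbf{u}(\mathbf{x}),\mathbf{x})$. Symmetrically, for item $(ii)$, I would set $\mathbf{h}=\mathbf{u}^{\star}(\mathbf{x})$ and use the relation $b(\mathbf{u}^{\star}(\mathbf{x}),\mathbf{x})=0$ from Theorem \ref{MTHM}$(ii)$ to kill the second summand, leaving $b(\mathbf{x},\mathbf{u}^{\star}(\mathbf{x}))$.

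Finally, item $(iii)$ follows immediately from $(i)$: when $b$ is symmetric one has $b(\mathbf{u}(\mathbf{x}),\mathbf{x})=b(\mathbf{x},\mathbf{u}(\mathbf{x}))=0$, so the right-hand side of $(i)$ vanishes identically and $\mathcal{L}_{\mathbf{u}}F_{b}\equiv 0$, i.e. $F_b$ is a first integral of $\mathbf{u}$. There is essentially no serious obstacle in this argument; the only subtlety is the bookkeeping of the two non-symmetric terms in $\mathrm{d}F_b$, together with the observation that each orthogonality condition supplied by Theorem \ref{MTHM} is tailored to cancel exactly one of those two terms.
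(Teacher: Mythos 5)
Your proof is correct, and it reaches the same two-term identity and the same cancellation as the paper, but by a more direct route. The paper never expands $b(\mathbf{x}+\mathbf{h},\mathbf{x}+\mathbf{h})$; instead it writes $F_b(\mathbf{x})=\langle \mathbf{x},B^{-1}\mathbf{x}\rangle$ using the geometric pair $(b,B)$, computes the Euclidean gradient $\nabla F_b(\mathbf{x})=B^{-1}\mathbf{x}+(B^{\star})^{-1}\mathbf{x}$ (using $B^{\star}=B^{\top}$), and then converts each inner-product term back into a $b$-term, arriving at $\left(\mathcal{L}_{\mathbf{u}}F_b\right)(\mathbf{x})=b(\mathbf{u}(\mathbf{x}),\mathbf{x})+b(\mathbf{x},\mathbf{u}(\mathbf{x}))$ before invoking the orthogonality relation --- exactly the sum you obtain in one line from bilinearity. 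Your intrinsic computation of $\mathrm{d}F_b(\mathbf{x})\cdot\mathbf{h}=b(\mathbf{x},\mathbf{h})+b(\mathbf{h},\mathbf{x})$ is cleaner and avoids any reliance on the auxiliary map $B$ and the identity $B^{\star}=B^{\top}$; what the paper's matrix route buys is uniformity with the rest of its computations, which are all channelled through the pair $(b,B)$. A small further difference in item $(iii)$: you deduce the vanishing directly from the symmetry of $b$ applied to $b(\mathbf{x},\mathbf{u}(\mathbf{x}))=0$, whereas the paper invokes $\mathbf{u}^{\star}=\mathbf{u}$ from Theorem \ref{MTHM}$(iv)$; your argument is lighter and equally valid.
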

\begin{proof} Let us start by recalling that $b(\mathbf{x},\mathbf{x})=\langle \mathbf{x}, B^{-1}\mathbf{x}\rangle, ~\forall \mathbf{x}\in\mathbb{R}^{n}_{b}$. Consequently, as $B^{\top}=B^{\star}$ we obtain that $\nabla F_{b}(\mathbf{x})=B^{-1}\mathbf{x} + (B^{\star})^{-1}\mathbf{x}, ~\forall \mathbf{x}\in\mathbb{R}^{n}_{b}$.
\begin{enumerate}
\item[(i)] Using the above formula and the relation $b(\mathbf{x},\mathbf{u}(\mathbf{x}))=0, ~\forall \mathbf{x}\in\mathbb{R}^{n}_{b}$, we get that for all $\mathbf{x}\in\mathbb{R}^{n}_{b}$ the following equalities hold true
\begin{align*}
\left(\mathcal{L}_{\mathbf{u}}F_{b}\right)(\mathbf{x})&=\langle \mathbf{u}(\mathbf{x}),\nabla F_{b}(\mathbf{x})\rangle = \langle \mathbf{u}(\mathbf{x}), B^{-1}\mathbf{x} \rangle + \langle \mathbf{u}(\mathbf{x}),(B^{\star})^{-1}\mathbf{x}\rangle\\
&=b(\mathbf{u}(\mathbf{x}),\mathbf{x})+ \langle (B^{\star})^{-1}\mathbf{x},\mathbf{u}(\mathbf{x})\rangle =b(\mathbf{u}(\mathbf{x}),\mathbf{x})+b(B^{\star}(B^{\star})^{-1}\mathbf{x},\mathbf{u}(\mathbf{x}))\\
&=b(\mathbf{u}(\mathbf{x}),\mathbf{x})+ b(\mathbf{x},\mathbf{u}(\mathbf{x}))=b(\mathbf{u}(\mathbf{x}),\mathbf{x}).
\end{align*}
\item[(ii)] Using the relation $b(\mathbf{u}^{\star}(\mathbf{x}),\mathbf{x})=0, ~\forall \mathbf{x}\in\mathbb{R}^{n}_{b}$, we obtain that for all $\mathbf{x}\in\mathbb{R}^{n}_{b}$ the following equalities hold true
\begin{align*}
\left(\mathcal{L}_{\mathbf{u}^{\star}}F_{b}\right)(\mathbf{x})&=\langle \mathbf{u}^{\star}(\mathbf{x}),\nabla F_{b}(\mathbf{x})\rangle = \langle \mathbf{u}^{\star}(\mathbf{x}), B^{-1}\mathbf{x} \rangle + \langle \mathbf{u}^{\star}(\mathbf{x}),(B^{\star})^{-1}\mathbf{x}\rangle\\
&=b(\mathbf{u}^{\star}(\mathbf{x}),\mathbf{x})+ \langle (B^{\star})^{-1}\mathbf{x},\mathbf{u}^{\star}(\mathbf{x})\rangle =b(\mathbf{u}^{\star}(\mathbf{x}),\mathbf{x})+b(B^{\star}(B^{\star})^{-1}\mathbf{x},\mathbf{u}^{\star}(\mathbf{x}))\\
&=b(\mathbf{u}^{\star}(\mathbf{x}),\mathbf{x})+ b(\mathbf{x},\mathbf{u}^{\star}(\mathbf{x}))=b(\mathbf{x},\mathbf{u}^{\star}(\mathbf{x})).
\end{align*}
\item[(iii)] The proof follows directly from $(i)$ or $(ii)$, taking into account that if $b$ is symmetric, then $\mathbf{u}^{\star}=\mathbf{u}$ and hence $b(\mathbf{u}(\mathbf{x}),\mathbf{x})=b(\mathbf{x},\mathbf{u}^{\star}(\mathbf{x}))=0, ~\forall \mathbf{x}\in\mathbb{R}^{n}_{b}$.
\end{enumerate}
\end{proof}

Next result is a direct consequence of Proposition \ref{pr11} and Proposition \ref{pr12}, and gives two equivalent dynamical representations of the relation $b(\mathbf{x},\mathbf{u}(\mathbf{x}))= 0, ~\forall \mathbf{x}\in\mathbb{R}^{n}_{b}$, in the case when $b$ is symmetric.

\begin{proposition}
Let $N,F_b :\mathbb{R}^{n}_{b}\rightarrow \mathbb{R}$ be given by $N(\mathbf{x})=\|\mathbf{x}\|^2, F_b(\mathbf{x})=b(\mathbf{x},\mathbf{x}), ~\forall \mathbf{x}\in\mathbb{R}^{n}_{b}$. If $b$ is symmetric then the following statements are equivalent.
\begin{enumerate}
\item[(i)] $b(\mathbf{x},\mathbf{u}(\mathbf{x}))= 0, ~\forall \mathbf{x}\in\mathbb{R}^{n}_{b}$,
\item[(ii)] $F_{b}$ is a first integral of the vector field $\mathbf{u}\in\mathfrak{X}(\mathbb{R}^{n}_{b})$,
\item[(iii)] $N$ is a first integral of the vector field $B^{-1}\mathbf{u}\in\mathfrak{X}(\mathbb{R}^{n}_{b})$.
\end{enumerate}
\end{proposition}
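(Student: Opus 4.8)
The plan is to reduce all three statements to the single pointwise identity appearing in (i), namely $b(\mathbf{x},\mathbf{u}(\mathbf{x}))=0$ for all $\mathbf{x}\in\mathbb{R}^{n}_{b}$, by reading off the Lie--derivative formulas already established in Proposition \ref{pr11} and Proposition \ref{pr12}. The key observation is that ``$\varphi$ is a first integral of a vector field $Y$'' is, by definition, the statement $\mathcal{L}_{Y}\varphi\equiv 0$, and both preceding propositions compute the relevant Lie derivatives explicitly in terms of $b(\mathbf{x},\mathbf{u}(\mathbf{x}))$. Thus no new computation is really needed; I would simply assemble the two propositions and exploit the symmetry of $b$ to match the orders of the arguments. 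Rather than prove a full cycle $(i)\Rightarrow(ii)\Rightarrow(iii)\Rightarrow(i)$, it is cleaner to prove the two equivalences $(i)\Leftrightarrow(ii)$ and $(i)\Leftrightarrow(iii)$ directly.

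For $(i)\Leftrightarrow(ii)$, I would invoke Proposition \ref{pr12}$(i)$, which gives $\left(\mathcal{L}_{\mathbf{u}}F_{b}\right)(\mathbf{x})=b(\mathbf{u}(\mathbf{x}),\mathbf{x})$ for all $\mathbf{x}\in\mathbb{R}^{n}_{b}$. Since $b$ is symmetric, $b(\mathbf{u}(\mathbf{x}),\mathbf{x})=b(\mathbf{x},\mathbf{u}(\mathbf{x}))$, so $\left(\mathcal{L}_{\mathbf{u}}F_{b}\right)(\mathbf{x})=b(\mathbf{x},\mathbf{u}(\mathbf{x}))$. Hence $F_{b}$ is a first integral of $\mathbf{u}$, i.e. $\mathcal{L}_{\mathbf{u}}F_{b}\equiv 0$, precisely when $b(\mathbf{x},\mathbf{u}(\mathbf{x}))=0$ for all $\mathbf{x}$, which is exactly (i). For $(i)\Leftrightarrow(iii)$, I would reuse the computation inside the proof of Proposition \ref{pr11}, which establishes $\left(\mathcal{L}_{B^{-1}\mathbf{u}}N\right)(\mathbf{x})=2\,b(\mathbf{x},\mathbf{u}(\mathbf{x}))$ for all $\mathbf{x}\in\mathbb{R}^{n}_{b}$. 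Consequently $N$ is a first integral of $B^{-1}\mathbf{u}$ if and only if $b(\mathbf{x},\mathbf{u}(\mathbf{x}))=0$ for all $\mathbf{x}$, again recovering (i). Chaining the two equivalences yields the full statement.

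I do not expect a genuine obstacle here, since the content is already contained in the two cited propositions; the only point requiring a little care is the bookkeeping of the arguments of $b$. Specifically, Proposition \ref{pr12}$(i)$ produces $b(\mathbf{u}(\mathbf{x}),\mathbf{x})$ whereas condition (i) is phrased with $b(\mathbf{x},\mathbf{u}(\mathbf{x}))$, and it is exactly the hypothesis that $b$ is symmetric that lets these be identified. (Note that without symmetry the two orderings differ, and indeed the analogous right--versus--left distinction is what motivates keeping $\mathbf{u}$ and $\mathbf{u}^{\star}$ separate; here symmetry collapses the distinction, which is why the proposition is stated only in the symmetric case.) It is worth emphasizing in the write--up that the fact $N$ is a first integral of $B^{-1}\mathbf{u}$, asserted unconditionally in Proposition \ref{pr11}, is compatible with this: there, (i) was assumed to hold for the $\mathbf{u}$ coming from the decomposition, whereas in the present proposition the three conditions are being related as logically equivalent properties of an arbitrary $\mathbf{u}$.
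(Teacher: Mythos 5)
Your proposal is correct and is essentially the paper's own argument: the paper gives this proposition no separate proof, stating it as a direct consequence of Propositions \ref{pr11} and \ref{pr12}, which is precisely the assembly of Lie--derivative identities you carry out. One small citation point: for $(ii)\Rightarrow(i)$ you should not invoke Proposition \ref{pr12}$(i)$ verbatim, since its stated identity $\left(\mathcal{L}_{\mathbf{u}}F_{b}\right)(\mathbf{x})=b(\mathbf{u}(\mathbf{x}),\mathbf{x})$ was itself derived using $b(\mathbf{x},\mathbf{u}(\mathbf{x}))=0$; instead reuse the computation inside its proof (exactly as you do with Proposition \ref{pr11}), which for arbitrary $\mathbf{u}$ and symmetric $b$ yields $\left(\mathcal{L}_{\mathbf{u}}F_{b}\right)(\mathbf{x})=2\,b(\mathbf{x},\mathbf{u}(\mathbf{x}))$ --- the factor $2$ being immaterial to the vanishing equivalence.
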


Let us illustrate now the Theorem \ref{MTHM} for each of the particular cases listed in the Example \ref{expl}. We shall refer only to item $(iv)$ of Theorem \ref{MTHM}, as within the Example \ref{expl}, the geometric structure $b$ is either symmetric or skew--symmetric.

\begin{proposition}\label{prp} Let $X\in\mathfrak{X}(\mathbb{R}^{n})$ be a vector field of class $\mathcal{C}^{1}$ on $\mathbb{R}^{n}$. Let $b$ be a geometric structure on $\mathbb{R}^{n}$. Then the following assertions hold.
\begin{enumerate}
\item[(i)] If $b(\mathbf{x},\mathbf{y})=\langle \mathbf{x},\mathbf{y} \rangle, ~\forall \mathbf{x},\mathbf{y}\in\mathbb{R}^{n}$, then $\mathbb{R}^{n}_{b}$ is the canonical Euclidean $n-$dimensional vector space, and there exists a \textbf{unique global decomposition} of $X$ given by
\begin{equation}
X(\mathbf{x})=\nabla H(\mathbf{x}) + \mathbf{u}(\mathbf{x}), ~ \forall \mathbf{x}\in\mathbb{R}^{n}_{b},
\end{equation}
such that $H(\mathbf{0})=0$ and $\langle\mathbf{u}(\mathbf{x}),\mathbf{x}\rangle=\langle\mathbf{x},\mathbf{u}(\mathbf{x})\rangle=0, ~\forall \mathbf{x}\in\mathbb{R}^{n}_{b}$. (This decomposition was first given in \cite{presnov} and then extended to Riemannian manifolds in \cite{presnov2}.)

The function $N(\mathbf{x})=\|\mathbf{x}\|^2$, is a first integral of the vector field $\mathbf{u}$. 
\item[(ii)] If $n=2m$ and $b(\mathbf{x},\mathbf{y})=\omega_{0}(\mathbf{x},\mathbf{y}):=\langle \mathbf{x},\mathbb{J}_{n} \mathbf{y} \rangle, ~\forall \mathbf{x},\mathbf{y}\in\mathbb{R}^{n}$, then $\mathbb{R}^{n}_{b}$ is the canonical symplectic $n-$dimensional vector space, and there exists a \textbf{unique global decomposition} of $X$ given by
\begin{equation}
X(\mathbf{x})=X_H (\mathbf{x}) + \mathbf{u}(\mathbf{x}), ~ \forall \mathbf{x}\in\mathbb{R}^{n}_{b},
\end{equation}
such that $H(\mathbf{0})=0$ and $\omega_{0}(\mathbf{u}(\mathbf{x}),\mathbf{x})=\omega_{0}(\mathbf{x},\mathbf{u}(\mathbf{x}))=0, ~\forall \mathbf{x}\in\mathbb{R}^{n}_{b}$. 

The function $N(\mathbf{x})=\|\mathbf{x}\|^2$, is a first integral of the vector field $\mathbb{J}_{n}\mathbf{u}$. 
\item[(iii)] If $b(\mathbf{x},\mathbf{y})=\langle \mathbf{x},\mathbb{E}_{n-1,1} \mathbf{y} \rangle, ~\forall \mathbf{x},\mathbf{y}\in\mathbb{R}^{n}$, then $\mathbb{R}^{n}_{b}$ is the canonical Minkowski $n-$dimensional vector space $\mathbb{R}^{n-1,1}$, and there exists a \textbf{unique global decomposition} of $X$ given by
\begin{equation}
X(\mathbf{x})=\nabla_{n-1,1} H(\mathbf{x}) + \mathbf{u}(\mathbf{x}), ~ \forall \mathbf{x}\in\mathbb{R}^{n-1,1},
\end{equation}
such that $H(\mathbf{0})=0$ and $b(\mathbf{u}(\mathbf{x}),\mathbf{x})=b(\mathbf{x},\mathbf{u}(\mathbf{x}))=0, ~\forall \mathbf{x}\in\mathbb{R}^{n-1,1}$. 

The function $F_{n-1,1}(\mathbf{x})=\langle \mathbf{x},\mathbb{E}_{n-1,1} \mathbf{x} \rangle$ is a first integral of the vector field $\mathbf{u}$, or equivalently, the function $N(\mathbf{x})=\|\mathbf{x}\|^2$, is a first integral of the vector field $\mathbb{E}_{n-1,1} \mathbf{u}$. 
\end{enumerate} 
\end{proposition}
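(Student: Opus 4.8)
The plan is to treat each of the three items as a direct specialization of Theorem \ref{MTHM}, with the gradient--like part identified through Example \ref{expl}, and the first--integral claims read off from Proposition \ref{pr11} and Proposition \ref{pr12}. Since in each case the form $b$ is either symmetric (items (i) and (iii)) or skew--symmetric (item (ii)), item (v) of Theorem \ref{MTHM} applies verbatim and already furnishes a unique global decomposition $X = \nabla_b H + \mathbf{u}$ with $H(\mathbf{0}) = 0$ and $b(\mathbf{u}(\mathbf{x}),\mathbf{x}) = b(\mathbf{x},\mathbf{u}(\mathbf{x})) = 0$. Hence the only work is to rewrite $\nabla_b H$ in its classical guise and to translate the orthogonality/first--integral conditions into the canonical inner product $\langle\cdot,\cdot\rangle$ via the geometric pair $(b,B)$.

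First I would compute the matrix $B$ of the geometric pair in each case from the defining relation $\langle\mathbf{x},\mathbf{y}\rangle = b(\mathbf{x},B\mathbf{y})$: this gives $B = I_n$ in the Euclidean case, $B = \mathbb{J}_n^{-1} = -\mathbb{J}_n$ in the symplectic case (using $\mathbb{J}_n^2 = -I_n$), and $B = \mathbb{E}_{n-1,1}$ in the Minkowski case (an involution). With these, Example \ref{expl} identifies $\nabla_b H$ as $\nabla H$, as $X_H = \mathbb{J}_n\nabla H$, and as $\nabla_{n-1,1}H = \mathbb{E}_{n-1,1}\nabla H$ respectively, which yields the three displayed decompositions together with their uniqueness. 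In the Euclidean and Minkowski cases the two orthogonality conditions of Theorem \ref{MTHM}(v) take the stated symmetric form because $b$ is symmetric, while in the symplectic case they read $\omega_{0}(\mathbf{u}(\mathbf{x}),\mathbf{x}) = \omega_{0}(\mathbf{x},\mathbf{u}(\mathbf{x})) = 0$.

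For the first--integral statements I would invoke Proposition \ref{pr11}, which guarantees that $N(\mathbf{x}) = \|\mathbf{x}\|^2$ is a first integral of $B^{-1}\mathbf{u}$. Substituting the three values of $B^{-1}$, namely $I_n$, $(-\mathbb{J}_n)^{-1} = \mathbb{J}_n$, and $\mathbb{E}_{n-1,1}^{-1} = \mathbb{E}_{n-1,1}$, shows that $N$ is a first integral of $\mathbf{u}$ (Euclidean), of $\mathbb{J}_n\mathbf{u}$ (symplectic), and of $\mathbb{E}_{n-1,1}\mathbf{u}$ (Minkowski). For the equivalent Minkowski formulation I would instead apply Proposition \ref{pr12}(iii): since $b$ is symmetric there, $F_b(\mathbf{x}) = b(\mathbf{x},\mathbf{x}) = \langle\mathbf{x},\mathbb{E}_{n-1,1}\mathbf{x}\rangle = F_{n-1,1}(\mathbf{x})$ is a first integral of $\mathbf{u}$, and its equivalence with the statement about $\mathbb{E}_{n-1,1}\mathbf{u}$ is precisely the content of the proposition immediately preceding this one.

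The computations are entirely routine; the only place demanding care is the bookkeeping of signs in the symplectic case, where one must track that $B = -\mathbb{J}_n$ yet $\nabla_b H = B^{\star}\nabla H = \mathbb{J}_n\nabla H = X_H$ and $B^{-1} = \mathbb{J}_n$, so that the Hamiltonian field and the first--integral field $\mathbb{J}_n\mathbf{u}$ come out with the correct (opposite) relation to $B$. I expect no genuine obstacle beyond this bookkeeping, since every structural ingredient has already been established in Theorem \ref{MTHM} and in Propositions \ref{pr11} and \ref{pr12}.
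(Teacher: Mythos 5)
Your proposal is correct and follows essentially the same route as the paper, which states this proposition precisely as a direct specialization of Theorem \ref{MTHM} (for symmetric or skew--symmetric $b$) to the three cases of Example \ref{expl}, with the first--integral claims supplied by Propositions \ref{pr11} and \ref{pr12}. Your explicit computation of the geometric pairs ($B=I_n$, $B=-\mathbb{J}_n$ with $B^{\star}=\mathbb{J}_n$ and $B^{-1}=\mathbb{J}_n$, $B=\mathbb{E}_{n-1,1}$) just fills in the routine bookkeeping the paper leaves implicit, and the signs in the symplectic case are handled correctly.
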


Let us now apply the results of the above Proposition in the case of two concrete dynamical systems, namely, the Lotka--Volterra system and respectively the Rikitake system. As both systems are defined on $\mathbb{R}^n$ (for $n=2$, respectively $n=3$), and $\mathbb{R}^n$ can be given different geometric structures, to each system apart, we want to determine explicitly its part which is compatible with the geometric structure we endow the model space with. Note that, as the dimension of the model space of the Rikitake system is odd, we cannot apply in this case the item $(ii)$ of Proposition \ref{prp}.

\begin{example}
\begin{enumerate}
\item[(1)] \textbf{The Lotka--Volterra dynamical system} provides a mathematical model to describe biological systems in which two species interact as predator versus prey. The vector field associated to the Lotka--Volterra system is given by
$$
X_{LV}(x,y):=(\alpha x-\beta xy)\partial_{x} +(\delta xy - \gamma y)\partial_{y}, ~ \forall (x,y)\in\mathbb{R}^2,
$$
where $\alpha, \beta, \gamma, \delta \geq 0$ are real parameters. For details regarding the biological interpretation of the Lotka--Volterra system, see e.g. \cite{lotka}, \cite{volterra}.  

Applying the Proposition \ref{prp} to the Lotka--Volterra system, we obtain the following geometric representations.
\begin{enumerate}
\item[(a)] 	If we endow $\mathbb{R}^{2}$ with the canonical inner product, $$b((x,y),(x',y'))=\langle(x,y),(x',y')\rangle=xx'+yy', ~\forall (x,y),(x',y')\in\mathbb{R}^{2},$$ then there exists a unique global decomposition of $X_{LV}$
\begin{equation*}
X_{LV}(\mathbf{x})=\nabla H(\mathbf{x}) + \mathbf{u}(\mathbf{x}), ~ \forall \mathbf{x}=(x,y)\in\mathbb{R}^{2}_{b},
\end{equation*}
such that $H(\mathbf{0})=0$ and $\langle\mathbf{u}(\mathbf{x}),\mathbf{x}\rangle=\langle\mathbf{x},\mathbf{u}(\mathbf{x})\rangle=0, ~\forall \mathbf{x}\in\mathbb{R}^{2}_{b}$.

More precisely, we have that
\begin{align*}
H(x,y)&=\dfrac{1}{2}\alpha x^2 -\dfrac{1}{2}\gamma y^2 +\dfrac{1}{3}xy(-\beta x+\delta y), ~\forall (x,y)\in\mathbb{R}^{2}_{b},\\
\mathbf{u}(x,y)&=\dfrac{1}{3}(\beta x +\delta y)(-y \partial_x + x\partial_y), ~\forall (x,y)\in\mathbb{R}^{2}_{b}.
\end{align*}

Note that $F_b =x^2 +y^2$ is a first integral of the vector field $\mathbf{u}$.

\item[(b)] 	If we endow $\mathbb{R}^{2}$ with the canonical symplectic form, $$b((x,y),(x',y'))=\omega_{0}((x,y),(x',y'))=xy'-x'y, ~\forall (x,y),(x',y')\in\mathbb{R}^{2},$$ then there exists a unique global decomposition of $X_{LV}$
\begin{equation*}
X_{LV}(\mathbf{x})=X_H(\mathbf{x}) + \mathbf{u}(\mathbf{x}), ~ \forall \mathbf{x}=(x,y)\in\mathbb{R}^{2}_{b},
\end{equation*}
such that $H(\mathbf{0})=0$ and $\omega_{0}(\mathbf{u}(\mathbf{x}),\mathbf{x})=\omega_{0}(\mathbf{x},\mathbf{u}(\mathbf{x}))=0, ~\forall \mathbf{x}\in\mathbb{R}^{2}_{b}$.

More precisely, we have that
\begin{align*}
H(x,y)&=xy\left[\dfrac{1}{2}\left(\alpha+\gamma\right)-\dfrac{1}{3}\delta x -\dfrac{1}{3}\beta y\right], ~\forall (x,y)\in\mathbb{R}^{2}_{b},\\
\mathbf{u}(x,y)&=\left[ \dfrac{1}{2}\left(\alpha-\gamma\right)+\dfrac{1}{3}\delta x -\dfrac{1}{3}\beta y\right](x \partial_x + y\partial_y), ~\forall (x,y)\in\mathbb{R}^{2}_{b}.
\end{align*}

\item[(c)] 	If we endow $\mathbb{R}^{2}$ with the canonical Minkowski product, $$b((x,y),(x',y'))=xx'-yy', ~\forall (x,y),(x',y')\in\mathbb{R}^{2},$$ then $\mathbb{R}^{2}_{b}=\mathbb{R}^{1,1}$, and there exists a unique global decomposition of $X_{LV}$
\begin{equation*}
X_{LV}(\mathbf{x})=\nabla_{1,1} H(\mathbf{x}) + \mathbf{u}(\mathbf{x}), ~ \forall \mathbf{x}=(x,y)\in\mathbb{R}^{1,1},
\end{equation*}
such that $H(\mathbf{0})=0$ and $b(\mathbf{u}(\mathbf{x}),\mathbf{x})=b(\mathbf{x},\mathbf{u}(\mathbf{x}))=0, ~\forall \mathbf{x}\in\mathbb{R}^{1,1}$.

More precisely, we have that
\begin{align*}
H(x,y)&=\dfrac{1}{2}\alpha x^2 +\dfrac{1}{2}\gamma y^2 -\dfrac{1}{3}xy(\beta x+\delta y), ~\forall (x,y)\in\mathbb{R}^{1,1},\\
\mathbf{u}(x,y)&=\dfrac{1}{3}\left(-\beta x +\delta y\right)(y\partial_x + x \partial_y), ~\forall (x,y)\in\mathbb{R}^{1,1}.
\end{align*}

Note that $F_{1,1}=x^2 -y^2$ is a first integral of the vector field $\mathbf{u}$.
\end{enumerate}

\item[(2)] \textbf{The Rikitake dynamical system} provides a mathematical model to describe the reversals of the Earth's magnetic field. The vector field associated to the Rikitake system is given by
$$
X_{Rik}(x,y,z):=(-\mu x +yz)\partial_{x} +[-\mu y +x(z-a)]\partial_{y} + (1-xy)\partial_{z}, ~ \forall (x,y,z)\in\mathbb{R}^3,
$$
where $\mu, a \geq 0$ are two parameters. For a physical meaning of the variables $x,y,z$ and the parameters $\mu,a$, see e.g. \cite{riki}, \cite{hardy}.  

Applying the Proposition \ref{prp} (item $(i)$ and $(iii)$) to the Rikitake system, we obtain the following geometric representations.
\begin{enumerate}
\item[(a)] 	If we endow $\mathbb{R}^{3}$ with the canonical inner product, $b((x,y,z),(x',y',z'))=\langle(x,y,z),(x',y',z')\rangle=xx'+yy'+zz', ~\forall (x,y,z),(x',y',z')\in\mathbb{R}^{3}$, then there exists a unique global decomposition of $X_{Rik}$
\begin{equation*}
X_{Rik}(\mathbf{x})=\nabla H(\mathbf{x}) + \mathbf{u}(\mathbf{x}), ~ \forall \mathbf{x}=(x,y,z)\in\mathbb{R}^{3}_{b},
\end{equation*}
such that $H(\mathbf{0})=0$ and $\langle\mathbf{u}(\mathbf{x}),\mathbf{x}\rangle=\langle\mathbf{x},\mathbf{u}(\mathbf{x})\rangle=0, ~\forall \mathbf{x}\in\mathbb{R}^{3}_{b}$.

More precisely, we have that
\begin{align*}
H(x,y,z)&=-\dfrac{\mu}{2}x^2 -\dfrac{\mu}{2}y^2 + \dfrac{1}{3}xyz -\dfrac{a}{2}xy+z, ~\forall (x,y,z)\in\mathbb{R}^{3}_{b},\\
\mathbf{u}(x,y,z)&=\left(\dfrac{2}{3}yz +\dfrac{a}{2}y\right)\partial_x +\left(\dfrac{2}{3}xz -\dfrac{a}{2}x\right)\partial_y -\dfrac{4}{3}xy\partial_{z}, ~\forall (x,y,z)\in\mathbb{R}^{3}_{b}.
\end{align*}

Note that $F_b =x^2 +y^2 +z^2$ is a first integral of the vector field $\mathbf{u}$.

\item[(b)] 	If we endow $\mathbb{R}^{3}$ with the canonical Minkowski product, $b((x,y,z),(x',y',z'))=xx'+yy'-zz', ~\forall (x,y,z),(x',y',z')\in\mathbb{R}^{3}$, then $\mathbb{R}^{3}_{b}=\mathbb{R}^{2,1}$, and there exists a unique global decomposition of $X_{Rik}$
\begin{equation*}
X_{Rik}(\mathbf{x})=\nabla_{2,1} H(\mathbf{x}) + \mathbf{u}(\mathbf{x}), ~ \forall \mathbf{x}=(x,y,z)\in\mathbb{R}^{2,1},
\end{equation*}
such that $H(\mathbf{0})=0$ and $b(\mathbf{u}(\mathbf{x}),\mathbf{x})=b(\mathbf{x},\mathbf{u}(\mathbf{x}))=0, ~\forall \mathbf{x}\in\mathbb{R}^{2,1}$.

More precisely, we have that
\begin{align*}
H(x,y,z)&=-\dfrac{\mu}{2}x^2 -\dfrac{\mu}{2}y^2 + xyz -\dfrac{a}{2}xy -z, ~\forall (x,y,z)\in\mathbb{R}^{2,1},\\
\mathbf{u}(x,y,z)&=\dfrac{a}{2}\left(y\partial_x - x \partial_y \right), ~\forall (x,y,z)\in\mathbb{R}^{2,1}.
\end{align*}

Note that for every $\psi\in\mathcal{C}^{1}(\mathbb{R},\mathbb{R})$, the function $F_{\psi}=x^2 +y^2 +\psi(z)$ is a first integral of the vector field $\mathbf{u}$; for $\psi=- z^2$, we obtain $F_{\psi}=F_{2,1}=x^2+y^2-z^2$.

\textbf{In contrast with the incompatibility between the Rikitake system and the Euclidean geometry of its model space, for $a=0$, the vector field $\mathbf{u}$ vanishes identically, and consequently the Rikitake system becomes a Minkowski gradient system, for every $\mu\geq 0$}.
\end{enumerate}
\end{enumerate}
\end{example}

\section{On topological conjugacy of vector fields on $\mathbb{R}^{n}$}

The aim of this short section is to apply the geometric decomposition of vector fields given in Theorem \ref{MTHM} in order to give a criterion to decide topological conjugacy of vector fields of class $\mathcal{C}^1$ on $\mathbb{R}^{n}$ based on the topological conjugacy of the corresponding parts given by the splitting introduced in Theorem \ref{MTHM}. From now on we shall assume that all vector fields are complete.

Before stating the main result of this section (which is an extension of Theorem 3.1 from \cite{anas}), let us recall that two $\mathcal{C}^1$ vector fields on $\mathbb{R}^{n}$ are called \textit{topologically equivalent} if there exists a homeomorphism which takes orbits to orbits and preserves also their orientation. If moreover the homeomorphism preserves also the parameterizations of the orbits then the two vector fields are said \textit{topologically conjugate}; for details regarding topological conjugacy and related topics see e.g. \cite{robinson}.

Let us state now the main result of this section, which is a generalization of Theorem 3.1 from \cite{anas}.

\begin{theorem}\label{MTHM2} Let $X_1, X_2 \in\mathfrak{X}(\mathbb{R}^{n})$ be two vector fields of class $\mathcal{C}^1$ on $\mathbb{R}^{n}$. Let $b_1$, $b_2$ be two inner products on $\mathbb{R}^n$, and let $X_1 = X_{1}^{g,b_1} +X_{1}^{r,b_1}$, $X_2 = X_{2}^{g,b_2} +X_{2}^{r,b_2}$ be the associated geometric decompositions of type \eqref{decvf}. 

Assume that both vector fields, $X_{1}^{g,b_1}$, $X_{2}^{g,b_2}$, admit a unique equilibrium point, $\mathbf{x}=\mathbf{0}$, which is moreover globally attracting (repelling).

Then $X_1$ is topologically conjugated to $X_2$, if $X_{1}^{r,b_1}$ and $X_{2}^{r,b_2}$ are topologically conjugated.  
\end{theorem}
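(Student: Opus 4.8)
The plan is to reduce the statement to the Euclidean case treated in \cite{anas} by means of a linear isometry adapted to each inner product. Since $b_i$ is an inner product, the nondegeneracy--based compatibility between $b_i$ and the canonical inner product gives $b_i(\mathbf{x},\mathbf{y})=\langle \mathbf{x},B_i^{-1}\mathbf{y}\rangle$ with $B_i^{-1}$ symmetric and positive definite; I would set $T_i:=B_i^{-1/2}$ (the positive symmetric square root), so that $b_i(\mathbf{x},\mathbf{y})=\langle T_i\mathbf{x},T_i\mathbf{y}\rangle$ and $T_i$ carries $(\mathbb{R}^n,b_i)$ isometrically onto $(\mathbb{R}^n,\langle\cdot,\cdot\rangle)$. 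Because $T_i$ is linear and invertible, it is itself a topological conjugacy between the flow of $X_i$ and the flow of the push--forward $\tilde X_i:=(T_i)_\ast X_i$, given by $\tilde X_i(\mathbf{y})=T_i X_i(T_i^{-1}\mathbf{y})$.

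The first substantive step is to show that the geometric decomposition transforms naturally under $T_i$, i.e. that $(T_i)_\ast$ sends the $b_i$--decomposition \eqref{decvf} of $X_i$ to the Euclidean decomposition of $\tilde X_i$. Writing $\tilde H_i:=H_i\circ T_i^{-1}$ and $\tilde{\mathbf{u}}_i:=(T_i)_\ast\mathbf{u}_i$, one checks from the defining relation \eqref{lg} of $\nabla_{b_i}$ together with the isometry property that $(T_i)_\ast(\nabla_{b_i}H_i)=\nabla \tilde H_i$ (the Euclidean gradient), with $\tilde H_i(\mathbf{0})=H_i(\mathbf{0})=0$, and that $\langle \tilde{\mathbf{u}}_i(\mathbf{y}),\mathbf{y}\rangle=b_i(\mathbf{u}_i(\mathbf{x}),\mathbf{x})=0$ where $\mathbf{y}=T_i\mathbf{x}$. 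By the uniqueness part of Theorem \ref{MTHM}(v), $\tilde X_i=\nabla \tilde H_i+\tilde{\mathbf{u}}_i$ is then \emph{the} Euclidean geometric decomposition of $\tilde X_i$, so that $\tilde X_i^{g}=(T_i)_\ast X_i^{g,b_i}$ and $\tilde X_i^{r}=(T_i)_\ast X_i^{r,b_i}$.

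With naturality in hand I would transfer the two hypotheses to the Euclidean pictures. Topological conjugacy is preserved by the linear conjugacies $T_i$, so $\tilde X_i^{g}$ still has $\mathbf{0}$ as its unique, globally attracting (repelling) equilibrium; and if $h$ conjugates $X_1^{r,b_1}$ to $X_2^{r,b_2}$, then $T_2\circ h\circ T_1^{-1}$ conjugates $\tilde X_1^{r}$ to $\tilde X_2^{r}$. Both decompositions are now taken with respect to the standard inner product, so the Euclidean criterion (Theorem 3.1 of \cite{anas}) applies and yields a topological conjugacy between $\tilde X_1$ and $\tilde X_2$. Composing the conjugacies gives $X_1\sim \tilde X_1\sim \tilde X_2\sim X_2$, as required.

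The routine parts are the naturality computation and the bookkeeping of conjugacies; the genuine dynamical content --- that, for a single inner product, the topological conjugacy class of the full flow is controlled by its orthogonal part once the gradient part is globally attracting --- is exactly what is supplied by the Euclidean result of \cite{anas}. The point I would check first, and the main potential obstacle, is that the hypotheses of Theorem 3.1 of \cite{anas} match the reduced situation verbatim (standard inner product, decomposition \eqref{decvf}, globally attracting gradient part, conjugacy of the orthogonal parts). Should the cited statement be framed slightly differently, I would instead reprove the Euclidean base case, where the difficulty becomes assembling the conjugacy from the orthogonal--part conjugacy together with the sphere--preserving structure guaranteed by Proposition \ref{pr12} (the orthogonal part is tangent to the $b$--spheres), while controlling the radial descent driven by the globally attracting gradient part uniformly near and at the origin --- the delicate matching at $\mathbf{0}$ being the part most likely to require care.
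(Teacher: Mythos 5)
Your proposal is correct, but it takes a genuinely different route from the paper. The paper does not reduce to the Euclidean case: it re-runs the argument of \cite{anas} directly in the general inner-product setting --- first observing (via Proposition \ref{pr11}) that $B_i^{-1}X_i^{r,b_i}$ is sphere-preserving, so the origin is an equilibrium of $X_1$ and $X_2$; then, using global attraction of the gradient parts, choosing a level set $\Sigma_i^{b_i}$ of the potential $H_i^{b_i}$ and noting that the gradient flow induces a diffeomorphism $F_i^{b_i}:\mathbb{R}\times\Sigma_i^{b_i}\to\mathbb{R}^n\setminus\{\mathbf{0}\}$; then pulling $X_1,X_2$ back through $F_2^{b_2}$, converting the conjugacy of the rotational parts into a conjugacy of these pullbacks, and finally transporting this to a homeomorphism of $\mathbb{R}^n\setminus\{\mathbf{0}\}$ that is extended to the origin by continuity. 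Your reduction via $T_i=B_i^{-1/2}$ is sound: the identity $T_iB_i=T_i^{-1}$ is exactly what makes $(T_i)_\ast(\nabla_{b_i}H_i)=\nabla(H_i\circ T_i^{-1})$, the isometry property gives $\langle\tilde{\mathbf{u}}_i(\mathbf{y}),\mathbf{y}\rangle=b_i(\mathbf{u}_i(\mathbf{x}),\mathbf{x})=0$, and the uniqueness clause of Theorem \ref{MTHM} then legitimately identifies the push-forward of the $b_i$-splitting with the Euclidean splitting of $\tilde X_i$; since invertible linear maps preserve completeness, equilibria, global attraction/repulsion and topological conjugacy, Theorem 3.1 of \cite{anas} applies and composing the three conjugacies finishes the proof. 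What your approach buys is modularity and transparency: all the dynamical work is delegated to the cited Euclidean result, and the general statement is exposed as linearly isomorphic to the Euclidean one --- a reduction available precisely because all inner products on $\mathbb{R}^n$ are linearly equivalent (and unavailable for symplectic or indefinite symmetric $b$, consistent with the paper's remark after Theorem \ref{MTHM2} that the hypotheses fail in those cases). What the paper's route buys is exactly the safeguard you flag at the end: by redoing the level-set/flow construction rather than quoting it, the paper does not depend on the precise packaging of Theorem 3.1 of \cite{anas}, and it verifies that the construction itself survives the passage to a general inner product.
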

\begin{proof}
Let $B_1, B_2 \in\operatorname{GL}(n,\mathbb{R})$ be such that $(b_1, B_1)$, $(b_2, B_2)$ are the geometric pairs associated to $b_1, b_2$. Recall from Proposition \ref{pr11} that the vector fields $(B_1)^{-1} X_{1}^{r,b_1}$ and $(B_2)^{-1} X_{2}^{r,b_2}$ are sphere--preserving, i.e. they are tangent to each sphere with center in the origin and arbitrary radius. As each continuous sphere--preserving vector field in $\mathbb{R}^n$ admits the origin as equilibrium, it follows that the origin is also an equilibrium state of the vector fields $X_{1}^{r,b_1}$, $X_{2}^{r,b_2}$. Since by hypothesis, the origin is the unique equilibrium point of the vector fields $X_{1}^{g,b_1}$, $X_{2}^{g,b_2}$, we obtain that $\mathbf{x}_{e}=\mathbf{0}$ is an equilibrium point of $X_1$ and $X_2$. 

The rest of the proof follows mimetically the proof of Theorem 3.1 from \cite{anas}. Nevertheless, for the sake of completeness we shall recall from \cite{anas} only the main steps of the proof (without full computations), adapted to our situation. In the following we shall consider two cases, namely, first, if the origin is a globally attracting equilibrium point of $X_{1}^{g,b_1}$, $X_{2}^{g,b_2}$, and second, if the origin is a globally repelling equilibrium point of $X_{1}^{g,b_1}$, $X_{2}^{g,b_2}$. We shall discuss only the first case, the second case being treated similarly.

For $i\in\{1,2\}$, we denote by $H_{i}^{b_i}:\mathbb{R}^{n}\rightarrow \mathbb{R}$, the potential function associated to the gradient vector field $X_{i}^{g,b_i}$ (i.e. $X_{i}^{g,b_i}=\nabla_{b_i}H_{i}^{b_i}$), whose corresponding (global) flow we denote by $\Phi_{i}^{g,b_i}:\mathbb{R}\times\mathbb{R}^{n}\rightarrow \mathbb{R}^{n}$.

As the origin is a globally attracting equilibrium point of $X_{1}^{g,b_1}$ and $X_{2}^{g,b_2}$, there exist $h_{1}^{b_1},h_{2}^{b_2}\in\mathbb{R}$ such that for $i\in\{1,2\}$, the level set $\Sigma_{i}^{b_i}:=\{\mathbf{x}\in\mathbb{R}^{n}: ~H_{i}^{b_i}(\mathbf{x})=h_{i}^{b_i} \}$ is a codimension--one submanifold of $\mathbb{R}^n$ which does not contain the origin, and moreover the mapping $F_{i}^{b_i}:\mathbb{R}\times\Sigma_{i}^{b_i}\rightarrow \mathbb{R}^{n}\setminus\{\mathbf{0}\}$, given by $F_{i}^{b_i}(t,\mathbf{u})= \Phi_{i}^{g,b_i}(t,\mathbf{u})$, is a diffeomorphism.

Next, a direct computation shows that the vector fields $(F_{2}^{b_2})^{\star}X_1$, $(F_{2}^{b_2})^{\star}X_2 \in\mathfrak{X}(\mathbb{R}\times\Sigma_{2}^{b_2})$ are topologically conjugated (see \cite{anas} for computational details), or similarly, the vector fields $(F_{1}^{b_1})^{\star}X_1$, $(F_{1}^{b_1})^{\star}X_2 \in\mathfrak{X}(\mathbb{R}\times\Sigma_{1}^{b_1})$ are topologically conjugated. Thus, there exists a homeomorphism $h_{2}^{b_2}:\mathbb{R}\times\Sigma_{2}^{b_2} \rightarrow \mathbb{R}\times\Sigma_{2}^{b_2}$ such that
\begin{equation}\label{top1}
(h_{2}^{b_2})^{-1}\circ [(F_{2}^{b_2})^{\star}X_1]^{\tau} \circ h_{2}^{b_2} = [(F_{2}^{b_2})^{\star}X_2]^{\tau},
\end{equation}
where the notation $X^{\tau}$ stands for the flow of the vector field $X$.

If one denotes $F_{2}^{b_2}\circ h_{2}^{b_2} \circ (F_{2}^{b_2})^{-1}=:K_{2}^{b_2}:\mathbb{R}^{n}\setminus\{\mathbf{0}\}\rightarrow \mathbb{R}^{n}\setminus\{\mathbf{0}\}$, the relation \eqref{top1} becomes $(K_{2}^{b_2})^{-1}\circ X_1^{\tau}\circ K_{2}^{b_2}= X_2^{\tau}$, and hence $X_1$ and $X_2$ are topologically conjugated as vector fields on $\mathbb{R}^{n}\setminus\{\mathbf{0}\}$.  

The global topological conjugacy of $X_1$ and $X_2$ follows if we extend by continuity the homeomorphism $K_{2}^{b_2}$ at $\mathbf{x}=\mathbf{0}$, by setting $K_{2}^{b_2}(\mathbf{0})=\mathbf{0}$. 

\end{proof}

\begin{remark}
\begin{enumerate}
\item[(1)] An advantage of Theorem \ref{MTHM2} is that for a fixed pair of vector fields, $X_1, X_2$, we have the freedom to choose the inner products $b_1, b_2$.
\item[(2)] The hypothesis of Theorem \ref{MTHM2} does not hold if any of the inner products $b_1$, $b_2$ is replaced by a skew--symmetric geometric structure (i.e. symplectic form), as in this case, the associated gradient--like vector field becomes Hamiltonian, and consequently the origin cannot be an asymptotically stable equilibrium point of this vector field.
\item[(3)] In Theorem \ref{MTHM2}, if any of the inner products $b_1$, $b_2$ is replaced by a symmetric geometric structure (denoted by $b$) with signature $(p,q)$ such that $pq \neq 0$, then the (appropriate) restriction of the (global) flow of the associated gradient--like vector field $\nabla_{b}H^{b}$, does not necessarily provide a diffeomorphism between $\mathbb{R}\times\Sigma^{b}$ and $\mathbb{R}^{n}\setminus\{\mathbf{0}\}$, where $\Sigma^{b}$ stands for a level set of $H^{b}$ such that $\mathbf{0}\notin\Sigma^{b}$.

A simple example is given by the vector field $X=(-x+y)\partial_{x}+(x-y)\partial_{y}\in\mathfrak{X}(\mathbb{R}^{1,1})$. In this case it follows that the gradient--like part of $X$ with respect to the canonical Minkowski product is $\nabla_{1,1}H= -x\partial_{x}-y\partial_{y}$, where $H=\dfrac{1}{2}(-x^2 +y^2)$. Hence, the origin is the unique equilibrium of $\nabla_{1,1}H$, which is moreover globally asymptotically stable. Nevertheless, for each fixed $h\neq 0$, the image of $\mathbb{R}\times H^{-1}(\{h\})$ through the flow of $\nabla_{1,1}H$ is strictly included in $\mathbb{R}^2 \setminus\{(0,0)\}$.
\end{enumerate} 
\end{remark}


\bigskip
\bigskip

\noindent {\sc R.M. Tudoran}\\
West University of Timi\c soara\\
Faculty of Mathematics and Computer Science\\
Department of Mathematics\\
Blvd. Vasile P\^arvan, No. 4\\
300223 - Timi\c soara, Rom\^ania.\\
E-mail: {\sf razvan.tudoran@e-uvt.ro}\\
\medskip


\begin{thebibliography}{99}

\bibitem{anas} {\footnotesize \textsc{S. Anastassiou, S. Pnevmatikos and T. Bountis}, Classification of dynamical systems based on a decomposition of their vector fields, \textit{J. Differential Equations}, 253(2012), 2252-2262. }


\bibitem{helmholtz} {\footnotesize \textsc{A.J. Chorin and J.E. Marsden}, \textit{A Mathematical Introduction to Fluid Mechanics}, 3rd edition, Springer, 1993. }


\bibitem{deriaz} {\footnotesize \textsc{E. Deriaz and V. Perrier}, Orthogonal Helmholtz decomposition in arbitrary dimension using divergence--free and curl--free wavelets, \textit{Appl. Comput. Harmon. Anal.}, 26(2)(2009), 249-269. }



\bibitem{hardy} {\footnotesize \textsc{Y. Hardy, W.H. Steeb}, The Rikitake two--disk dynamo system and domains with periodic orbits, \textit{Int. J. Theor. Phys.}, 38(1999), 2413-2417. }


\bibitem{lotka} {\footnotesize \textsc{A.J. Lotka}, Analytical note on certain rhythmic relations in organic systems, \textit{Proc. Nat. Acad.}, 6(1920), 410-415. }

\bibitem{ort} {\footnotesize \textsc{M.D. Ortigueira, M. Rivero and J.J. Trujillo}, From a generalized Helmholtz decomposition theorem to fractional Maxwell equations, \textit{Commun. Nonlinear Sci. Numer. Simulat.}, 22(1-3)(2015), 1036-1049. }


\bibitem{presnov} {\footnotesize \textsc{E. Presnov}, Non-local decomposition of vector fields, \textit{Chaos, Solitons and Fractals}, 12(2002), 759-764. }


\bibitem{presnov2} {\footnotesize \textsc{E. Presnov}, Global decomposition of vector field on Riemannian manifolds along natural coordinates, \textit{Reports on Mathematical Physics}, 62(3)(2008), 273-282. }


\bibitem{riki} {\footnotesize \textsc{T. Rikitake}, Oscillators of a system of disk dynamos, \textit{Proc. Cambridge Philos. Soc.}, 54(1958), 89-105. }


\bibitem{robinson} {\footnotesize \textsc{C. Robinson}, \textit{Dynamical systems: Stability, Symbolic Dynamics, and Chaos}, CRC Press, 1999. }


\bibitem{shwarz} {\footnotesize \textsc{G. Schwarz}, \textit{Hodge decomposition -- a method for solving boundary value problems}, Vol. 1607, Lecture Notes in Mathematics, Springer, 1995. }

\bibitem{volterra} {\footnotesize \textsc{V. Volterra}, Fluctuations in the abundance of a species considered mathematically, \textit{Nature}, 118(1926), 558-560. }

\end{thebibliography}
\end{document}